\crefname{enumi}{}{}
\crefname{equation}{}{}
\def\@tocline#1#2#3#4#5#6#7{\relax
  \ifnum #1>\c@tocdepth % then omit
  \else
    \par \addpenalty\@secpenalty\addvspace{#2}%
    \begingroup \hyphenpenalty\@M
    \@ifempty{#4}{%
      \@tempdima\csname r@tocindent\number#1\endcsname\relax
    }{%
      \@tempdima#4\relax
    }%
    \parindent\z@ \leftskip#3\relax \advance\leftskip\@tempdima\relax
    \rightskip\@pnumwidth plus4em \parfillskip-\@pnumwidth
    #5\leavevmode\hskip-\@tempdima
      \ifcase #1
       \or\or \hskip 1em \or \hskip 2em \else \hskip 3em \fi%
      #6\nobreak\relax
    \dotfill\hbox to\@pnumwidth{\@tocpagenum{#7}}\par
    \nobreak
    \endgroup
  \fi}
\newtheorem{theorem}{Theorem}
\newtheorem*{theorem*}{Theorem}
\newtheorem{proposition}{Proposition}[section]
\newtheorem{lemma}[proposition]{Lemma}
\newtheorem{corollary}[proposition]{Corollary}
\theoremstyle{definition}
\newtheorem{definition}[proposition]{Definition}
\newtheorem{remark}[proposition]{Remark}
\newtheorem{example}[proposition]{Example}
\numberwithin{equation}{section}
\def \R {\mathbb {R}}
\def \E {\mathbb{E}}
\def \P {\mathbb{P}}
\DeclareMathOperator*{\argmax}{arg\,max}
\DeclareMathOperator{\Var}{Var}
\newcommand{\dz}{\, {\rm d} z}
\newcommand{\dy}{\, {\rm d} y}
\newcommand{\bigO}{\mathcal{O}}
\DeclareFontFamily{U}{mathx}{\hyphenchar\font45}
\DeclareFontShape{U}{mathx}{m}{n}{
      <5> <6> <7> <8> <9> <10>
      <10.95> <12> <14.4> <17.28> <20.74> <24.88>
      mathx10
      }{}
\DeclareSymbolFont{mathx}{U}{mathx}{m}{n}
\DeclareMathAccent{\widecheck}{0}{mathx}{"71}
\newcommand{\abs}[1]{\left\lvert#1\right\rvert}
\begin{document}

\title[]{Concentration bounds for intrinsic dimension estimation using Gaussian kernels}
\author[Andersson]{Martin Andersson}
\address{Martin Andersson,
Department of Mathematics,
Uppsala University,
S-751 06 Uppsala,
Sweden}
\email{martin.andersson@math.uu.se}

\begin{abstract}
We prove finite-sample concentration and anti-concentration bounds for dimension estimation using Gaussian kernel sums. Our bounds provide explicit dependence on sample size, bandwidth, and local geometric and distributional parameters, characterizing precisely how regularity conditions influence statistical performance. We also propose a bandwidth selection heuristic using derivative information, supported by numerical experiments.
\end{abstract}

\maketitle

\section{Introduction}
In this paper we address the problem of reliably estimating the \emph{intrinsic dimension} $d$ of a $d$-dimensional Riemannian submanifold $\Omega \subset \R^N$ from i.i.d. observations $X_1,\dots,X_n$ drawn from a distribution with density $p$ and support on $\Omega$, where $d$ is the dimension of the tangent space $T_x\Omega$. This problem arises across data analysis, from characterizing strange attractors \cite{grassberger1983measuring} to understanding generalization in neural networks \cite{ansuini2019intrinsic,birdal2021intrinsic,dupuis2023generalization}. In addition, many dimensionality reduction algorithms require dimension estimates as input \cite{tenenbaum2000global,wang2004adaptive,BerryHarlim2016}. 

Dimension estimation from finite samples is, however, fundamentally 
ambiguous: any point set admits embeddings in manifolds of varying 
dimensions, as in \cref{fig:panel_combo}. To obtain rigorous estimates of $d$, we must therefore impose regularity conditions on the geometry of $\Omega$ and the distribution of our samples.
\begin{figure}[htbp]
    \centering
    \includegraphics[width=0.8\textwidth]{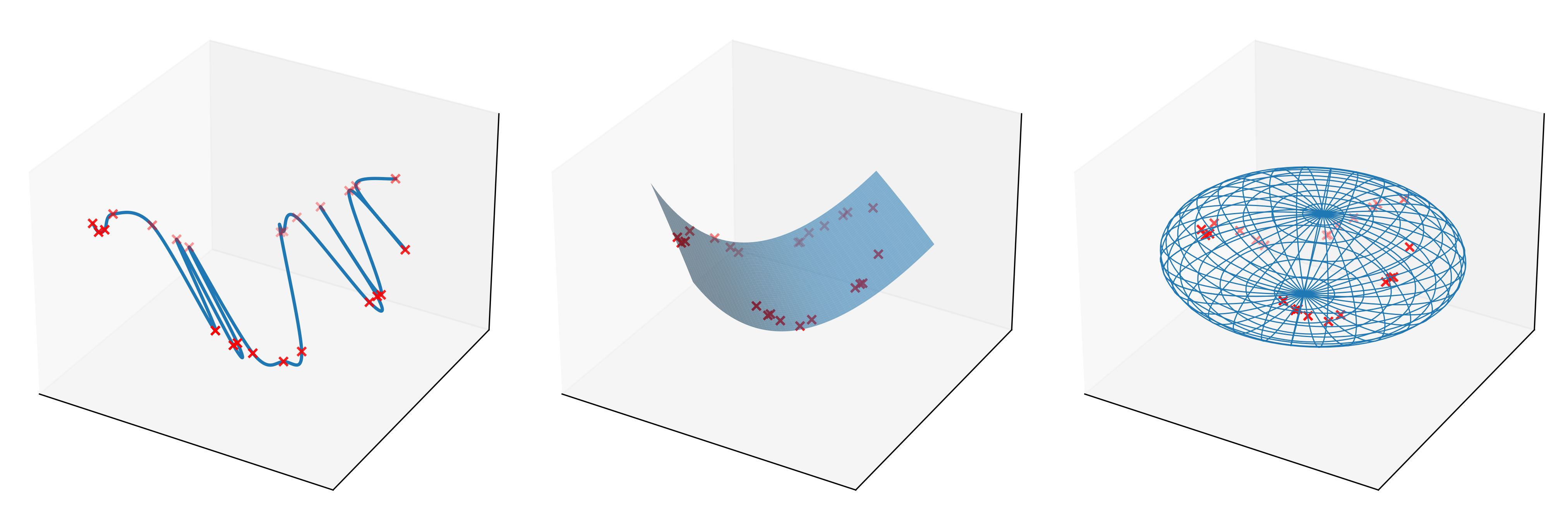}
    \caption{Same point set embedded as a 1D curve, 2D surface, and 3D volume, illustrating the need for regularity assumptions.}
    \label{fig:panel_combo}
\end{figure}
The central question we address is: How do sample size, dimension, and regularity assumptions on the geometry and distribution affect the statistical performance of a Gaussian kernel-based dimension estimator?

Such Gaussian kernel-based estimators were proposed in~\cite{coifman2008}. Their approach uses the Gaussian kernel $K_t(x,y) = e^{-\|x-y\|^2/t}$ and the global kernel sum over all pairs:
\begin{equation*}
    S(t) = \frac{1}{n^2} \sum_{i,j=1}^n K_t(X_i,X_j).
\end{equation*}

For small \emph{bandwidth} $t$ and large $n$, the expected value $\E[K_t(X_1,X_2)]$ scales approximately as $\pi^{d/2}t^{d/2}$, suggesting the dimension can be estimated from the slope of $\log S(t)$ versus $\log t$. Specifically, for appropriately chosen bandwidths $t_1 < t_2$, one estimates
\begin{equation}\label{eq:coifman}
    d \approx 2\frac{\log S(t_2)-\log S(t_1)}{\log(t_2)-\log(t_1)}.
\end{equation}

A central problem here is at what scale to choose the bandwidths $t_1,t_2$: If too small, the effective radius of the kernel does not capture enough points, and if too large the kernel sum saturates and asymptotes to being constantly 1. We need a scale at which the power-law dimensional scaling approximately holds. Moreover, the global estimator in \cref{eq:coifman} provides only a single dimension estimate for the entire dataset, which fails when $\Omega$ has varying local dimension or when $p$ is highly non-uniform.

We therefore study the \emph{local} kernel sum at a point $x \in \Omega$:

\begin{equation*}
    S(x,t) = \frac{1}{n} \sum_{j=1}^n K_t(x,X_j),
\end{equation*}
and the corresponding local dimension estimator
\begin{equation}\label{eq:estimator}
    \hat d(x,t) \coloneq 2\frac{\log S(x,2t)-\log S(x,t)}{\log 2}.
\end{equation}

While it is known that this estimator concentrates to $d$ for sufficiently large $n$ and appropriate choice of $t$, an explicit dependence on sample size, dimension, bandwidth, and the regularity of $\Omega$ and $p$ is, to our knowledge, novel.

On a more practical side, many dimension estimators perform better than worst-case theoretical guarantees suggest, see \cite{binnie2025survey}. When applications do not require absolute rigor, heuristic approaches to hyperparameter selection remain valuable. One of the most common kernels used is the indicator kernel, going back to \cite{grassberger1983measuring}. For this kernel there are varying heuristics for bandwidth (radius in this case) selection, such as using the distance to the $k$-nearest neighbor of $x$, $k$ being some fraction of $n$ \cite{farahmand2007manifold}. One reason the smooth Gaussian kernel can improve on indicator-kernel heuristics is that it enables using derivative information in bandwidth selection. We propose one such method in this paper.

\textbf{Our contributions}:
\begin{enumerate}
    \item We establish concentration bounds (Theorem~\ref{thm:main}) showing $\hat d(x,t)$ 
    concentrates around $d$ with constants having explicit dependence on geometric parameters, bandwidth, regularity of distribution and sample size.
    \item We prove anti-concentration bounds (Theorem~\ref{thm:anticonc:gaussian}, again with explicit dependence on parameters), which upper-bounds the probability $\P(|\hat d - d| \leq \varepsilon)$ that the estimator is within $\varepsilon$ of the true dimension, thereby giving a lower bound on the precision scale $\varepsilon$ at which concentration can occur.
    \item We propose a bandwidth selection heuristic (Algorithm~\ref{alg:bandwidth_selection}) 
    illustrating how derivative information, unavailable for indicator kernels,
    can be utilized. We validate it numerically in \cref{sec:numerical}.
\end{enumerate}
As a practical consequence, our explicit bounds enable finite-sample confidence interval construction for dimension estimates, which existing asymptotic results do not provide.

\textbf{Paper organization}: \cref{sec:related} reviews related work on dimension estimation. In \cref{sec:method} we explain our setup, and in \cref{sec:gaussian,sec:anti} we state and prove our results. \cref{sec:numerical} provides some numerical experiments, and \cref{sec:conclusion} concludes our paper. Supporting lemma proofs are deferred to \cref{sec:app_main,sec:bandwidth_proof}.

\section{Background and Related Work}\label{sec:related}
In the following, we review the most relevant prior work. There are many ways to estimate intrinsic dimension, see \cite{campadelli2015intrinsic,binnie2025survey} for an overview, but one can broadly categorize estimators according to the type of geometric information used \cite{binnie2025survey}.

One such family of estimators can be classified as \emph{tangential estimators}. These use the fact that locally, a smooth $d$-dimensional manifold looks like a graph over its tangent space. This implies that around a point $x\in \Omega$, points should cluster close to a $d$-dimensional affine space. These methods aim to find this affine space and from there estimate the dimension. These include principal component analysis (PCA) and local PCA \cite{fukunaga1971algorithm,fan2010intrinsic}. 

The estimator \cref{eq:estimator} is instead a type of \emph{parametric estimator}. These methods locally approximate the manifold by a flat region with uniform density. In this simplified setting, geometric quantities, such as neighbor counts, distances, and angles, have distributions parameterized by $d$, from which one can build estimators. Examples of this approach include maximum likelihood estimation \cite{levina2004maximum}, angle-and-norm concentration \cite{ceruti2012danco}, and extreme-value-theoretic methods \cite{amsaleg2018extreme}, but there are many others.
 
 A classical example of this is using volume growth. Suppose we have $n$ i.i.d. samples, $X_1, \dots, X_n \in \Omega$, and a point $x \in \Omega$ with density $p(x)$. Define the indicator kernel $K_{r}(x,y) \coloneq \mathbf{1}_{\|x-y\| \leq r}$, let $B_r(x)$ denote the ball of radius $r$ centered at $x$ and $V(\cdot)$ the volume function. Then for large $n$ and small $r$, 
 \begin{align*}
    \frac{1}{n} \sum_i K_{r}(x,X_i) \approx \P(X_1 \in B_r(x)) \approx p(x)V(B_r(x)) = p(x)V(B_1(x))r^d,
 \end{align*}
 and $\frac{\frac{1}{n} \sum_i K_{r_2}(x,X_i)}{\frac{1}{n} \sum_i K_{r_1}(x,X_i)} \approx (r_2/r_1)^d$. This expected scaling can be used to estimate $d$, as was done with the correlation integral, introduced in \cite{grassberger1983measuring},
 \begin{equation}\label{eq:corr_int}
    C(r) = \frac{1}{n^2}\sum_{i,j} K_r(X_i,X_j).
 \end{equation}
 To estimate, one chooses two scales $r_2,r_1$ and examines the scaling of the quotient $C(r_2)/C(r_1)$. However, knowing which scales $r_2, r_1$ to use is both a practical and theoretical challenge.

Instead of examining how the count of neighbors or kernels sums scale with bandwidth, many estimators instead fix the number of neighbors $k$ and use the $k$-nearest neighbor distance as radius. The authors in \cite{farahmand2007manifold} developed a nearest-neighbor ratio estimator with exponential finite sample guarantees. Their estimator takes the form
\begin{equation}
\hat{d}(x) = \frac{\log(2)}{\log(\hat{r}^{(k)}(x)/\hat{r}^{(\lceil k/2 \rceil)}(x))}
\end{equation}
where $\hat{r}^{(k)}(x)$ denotes the distance from $x$ to its $k$-th nearest neighbor. This works similarly as above, that for large $n$, 
\begin{align*}k/n \approx \P(X_1 \in B_{\hat{r}^{(k)}}(x)) \propto (\hat{r}^{(k)})^d.
\end{align*}
 Then $\hat{r}^{(k)}/\hat{r}^{(\lceil k/2 \rceil)} \approx 2^{1/d}$, yielding the dimension estimate above. Once more one faces the problem of choosing hyperparameters, in this case $k$, which is important for performance. Earlier nearest-neighbor approaches include \cite{pettis1979intrinsic}, and a more recent one is the TWO-NN estimator \cite{facco2017estimating}.

 The correlation integral closely resembles the estimator \cref{eq:coifman}, the main difference being the choice of kernel. One can generalize this type of estimator to wider classes of kernels. In \cite{hein2005intrinsic}, they proved that compactly supported kernels on manifolds with curvature and varying distributions asymptotically have the correct scaling. Our work differs in providing finite-sample concentration bounds with explicit constants for the non-compactly supported Gaussian kernel, thereby characterizing the dependence on sample size, geometric regularity and underlying distribution.

The choice of focusing on kernel-based methods here, and specifically the Gaussian kernel, is motivated by their general usefulness in extracting geometric and topological properties of data manifolds. This is a rich and well-established area of research, with some examples being the diffusion maps framework \cite{coifman2006diffusion}, Laplacian eigenmaps \cite{belkin2003laplacian}, kernel PCA \cite{scholkopf1998nonlinear}, and spectral clustering \cite{von2007tutorial}. These all leverage kernel functions to capture intrinsic geometric structures from data samples on manifolds, with the Gaussian kernel being a common choice. One key property of this kernel is its smoothness (exploited in \cref{sec:bandwidth}). Further, kernel-based methods often require dimension estimates for parameter selection (e.g., number of eigenvectors in spectral methods, manifold dimension in diffusion maps), motivating rigorous theoretical analysis of kernel-based dimension estimation.
 
Regarding heuristics in bandwidth selection, \cite{coifman2008} visually inspected the log-log plot to find the linear region. In a refinement, \cite{BerryHarlim2016,BerryHarlim2018} developed this into a more automatic condition, and suggested choosing a bandwidth that maximizes the slope of the log-log plot. More specifically, they create a dyadic partition of an interval $[t_0,t_1]$, evaluate the numerical derivative 
\begin{equation}
    \frac{\log(S(x,2t))-\log(S(x,t))}{\log(2)},
\end{equation}
and choose $t$ to maximize the slope. We try to improve on this in \cref{sec:bandwidth}, and we are not aware of similar approaches to bandwidth selection in the context of dimension estimation.

\section{Methodology and main results}\label{sec:method}
We now formalize our setup and state the main results.
In everything that follows, we assume $X_1,X_2, \dots,X_n \in \Omega \subset \R^N$ are i.i.d.\ random variables with $\kappa$-Lipschitz density $p$ (with respect to the ambient Euclidean metric in $\R^N$) supported on a $d$-dimensional manifold $\Omega \subset \R^N$. We assume access to some fixed point $x \in \Omega$ with $p(x) > 0$ (which could always be taken from the sample itself). We consider the kernel $K_t(x,y) = \exp\left(-\frac{\|x-y\|^2}{t}\right)$, and refer to the parameter $t$ as the bandwidth. We define the normalized kernel sum
\begin{equation}\label{eq:kernel_sum}
    S(x,t) \coloneq \frac{1}{n}\sum_{i=1}^n K_t(x,X_i).
\end{equation}
Let $p(x)$ be the density of a point $x$. Then, for small $t$ and under suitable geometric conditions on the manifold (which we make precise below),
\begin{equation}\label{eq:EK_t}
\E[K_t(x,X)] \approx p(x)\pi^{d/2}t^{d/2}.
\end{equation}
For large $n$, the law of large numbers gives $S(x,t) \approx \E[K_t(x,X)]$, explaining the power-law scaling $S(x,t) \propto t^{d/2}$ and the linear relationship $\log S(x,t) \approx \text{const} + (d/2)\log t$.

From these considerations it is natural to define the estimator
\begin{equation}\label{eq:lloc}
    \hat d(x,t) = 2\frac{\log(S(x,2t))-\log(S(x,t))}{\log(2)}.
\end{equation}

To make our bounds explicit, we need to quantify how 'well-behaved' the manifold $\Omega$ is. We do this through three local parameters: $L$ controls how much $\Omega$ curves away from its tangent space, $M$ controls volume distortion, and $r$ ensures $\Omega$ doesn't fold 
back on itself. Formally:

\begin{definition}[Local $(L,M,r)$-regularity]\label{def:LMr-regular}
Let $\Omega \subset \mathbb{R}^N$ be a smooth $d$-dimensional submanifold. We say $\Omega$ is \emph{locally $(L,M,r)$-regular at $x \in \Omega$} if there exist constants $L,M \geq 0$ and $r > 0$ such that in the ball $B_r(x)$, there exists a smooth orthogonal projection map $\pi: B_r(x) \cap \Omega \to B_r(x) \cap (x+ T_x\Omega)$ satisfying:

\begin{enumerate}[label=(\roman*)]
    \item For all $y \in B_r(x) \cap \Omega$ with $z = \pi(y) - x \in T_x\Omega$ and $h_z = \pi(y) - y$:
    \begin{equation*}
        \|x-y\|^2 = \|z\|^2 + \|h_z\|^2, \quad \|h_z\| \leq L\|z\|^2
    \end{equation*}
    
    \item The volume distortion satisfies $(1-M\|z\|^2) \leq V(y) \leq (1+M\|z\|^2)$ where $V(y)$ is the Jacobian determinant of $\pi^{-1}$.
\end{enumerate}
\end{definition}

\begin{remark}
The graph parametrization implies $|V(y)-1| = \bigO(L^2\|z\|^2)$, so one could absorb $M$ into $L^2$. We keep them separate because curvature and volume distortion enter the bounds of $t$ (see~\cref{lem:mult}) in differing ways.
\end{remark}

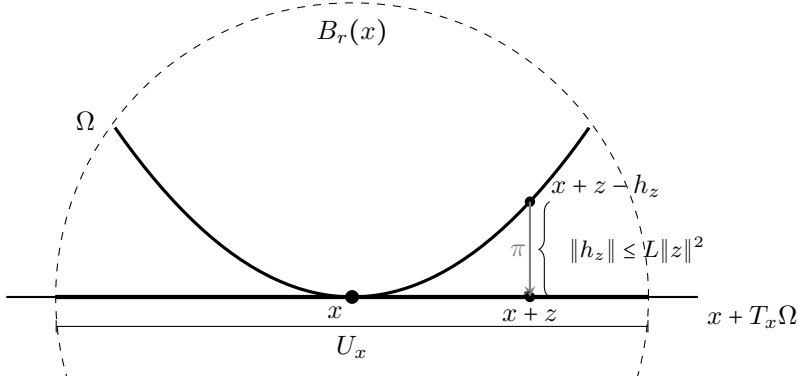
\begin{figure}[ht]
\centering
\begin{tikzpicture}[scale=1.3, >=stealth]
    % Clip to remove most of lower semicircle
    \clip (-3.7,-0.8) rectangle (5.5,3.3);
    
    % Define coordinates
    \coordinate (x) at (0,0);
    
    % Draw ball B_r(x) - LARGER radius
    \draw[black, dashed] (x) circle (3);
    \node[black] at ({2.7*cos(90)},{2.7*sin(90)}) {$B_r(x)$};
    
    % Draw tangent line
    \draw[thick] (-3.5,0) -- (3.5,0);
    
    % Draw interval U_x (covers ball intersection)
    \draw[ultra thick] (-3,0) -- (3,0);
    \draw[|-|, thin] (-3,-0.3) -- (3,-0.3);
    \node[below] at (0,-0.3) {$U_x$};
    
    % Draw curve Omega - steeper parabola
    \draw[very thick] plot[domain=-2.4:2.4, samples=50] 
        (\x, {0.3*\x*\x});
    \node at (-2.7,1.8) {$\Omega$};
    
    % Point on curve at z=1.8 for good separation
    \coordinate (xz) at (1.8,0);
    \coordinate (p) at (1.8,0.972); % height = 0.3*1.8^2
    
    % Mark points
    \fill (x) circle (2pt);
    \node[below left] at (x) {$x$};
    
    \fill (xz) circle (1.5pt);
    \fill (p) circle (1.5pt);
    
    % Draw projection - moved left of the point
    \draw[->, thick, gray] (p) -- (xz);
    \node[left, gray] at (1.86,0.48) {$\pi$};
    
    % Labels with plenty of space
    \node[below] at (1.8,-0.015) {$x+z$};
    \node[above right] at (1.92,0.92) {$x+z-h_z$};
    
    % Tangent space label
    \node[below right] at (3.5,0) {$x+T_x\Omega$};
    
    % Draw h_z with bound - MOVED RIGHT for separation
    \draw[decorate,decoration={brace,amplitude=4pt}] 
        (2.0,0) -- (2.0,0.972);
    \node[right,font=\small] at (2.1,0.48) {$\|h_z\| \leq L\|z\|^2$};
    
    % Right angle symbol
    \draw (1.75,0) -- (1.75,0.05) -- (1.8,0.05);
\end{tikzpicture}
\caption{$(L,M,r)$-regular at $x$: $\Omega \cap B_r(x)$ projects orthogonally onto $U_x = x+T_x\Omega \cap B_r(x)$.}
\label{fig:LMr}
\end{figure}

This particular formulation is also useful for our analysis, as we frequently change variables via the projection $\pi$ and need explicit control over the resulting distortion.

Here follows a simple example where we calculate these parameters.

\begin{example}[Circle of radius $R$]\label{ex:circle}
Let $\Omega = \{(x,y) \in \mathbb{R}^2 : x^2 + y^2 = R^2\}$ be the circle of radius $R$.

Consider the point $x = (R,0)$. The tangent space at $x$ is vertical: $T_x\Omega = \text{span}\{(0,1)\}$, and the normal space is horizontal: $(T_x\Omega)^\perp = \text{span}\{(1,0)\}$.

The orthogonal projection is $\pi_x(a,b) = (R,b)$. The inverse is
\begin{equation*}
    \pi_x^{-1}(R, t) = (\sqrt{R^2-t^2}, t) = (R,t) + (\sqrt{R^2-t^2}-R, 0),
\end{equation*}
for $|t|\leq R/2$.

We expand $\sqrt{R^2-t^2} = R\sqrt{1-t^2/R^2} = R - \frac{t^2}{2R} + \bigO(t^4/R^3)$, which gives
\begin{equation*}
h_z = (R,t) - \left(\sqrt{R^2-t^2},t\right) = \left(R - \sqrt{R^2-t^2}, 0\right) = \left(\frac{t^2}{2R}, 0\right) + \bigO(t^4/R^3),
\end{equation*}
which satisfies $\|h_z\| \le \frac{1}{R}\|z\|^2$ for $z = (0,t)$.

The arc length element is $ds = \frac{R\,dt}{\sqrt{R^2-t^2}} = 1 + \frac{t^2}{2R^2} + \bigO(t^4/R^4)$, giving Jacobian $V = 1 + \frac{t^2}{2R^2} + \bigO(t^4/R^4)$.

Thus, the conditions of \cref{def:LMr-regular} hold with $L = \frac{1}{R}$, $M = \frac{1}{R^2}$ and $r=R/2$.
\end{example}

\begin{remark}
For a manifold with reach $\tau_\Omega$ \cite{aamari2019estimating}, 
one can take $L = \bigO(1/\tau_\Omega)$, $M = \bigO(1/\tau_\Omega^2)$, and $r \lesssim \tau_\Omega$, so the regularity parameters are controlled by the reach alone. See \cite{aamari2019estimating} on the problem of estimating reach from data.
\end{remark}

In the rest of this paper, we assume that $\Omega$ is locally $(L,M,r)$-regular at each point $x \in \Omega$ where we estimate the dimension.

\begin{table}[ht]
\centering
\caption{Summary of notation. Symbols from later sections are included for forward reference.}
\label{tab:notation}
\begin{tabular}{@{}ll@{}}
\toprule
Symbol & Description \\
\midrule
\multicolumn{2}{@{}l}{\textit{Geometry and distribution}} \\[2pt]
$d,\; N$ & Intrinsic and ambient dimension \\
$\Omega \subset \R^N$ & $d$-dimensional submanifold \\
$n$ & Number of i.i.d.\ samples $X_1,\dots,X_n \in \Omega$ \\
$p,\;\kappa$ & Density on $\Omega$ and its Lipschitz constant \\
$L,\;M,\;r$ & Local regularity parameters (\cref{def:LMr-regular}) \\[4pt]
\multicolumn{2}{@{}l}{\textit{Estimator}} \\[2pt]
$K_t(x,y)$ & Gaussian kernel $\exp(-\|x-y\|^2/t)$ \\
$t$ & Bandwidth parameter \\
$S(x,t)$ & Normalized kernel sum $\frac{1}{n}\sum_i K_t(x,X_i)$ \\
$\hat d(x,t)$ & Dimension estimator \cref{eq:lloc} \\
$\varepsilon$ & Target precision for $|\hat d - d|$ \\[4pt]
\multicolumn{2}{@{}l}{\textit{Concentration bounds (\cref{sec:gaussian})}} \\[2pt]
$\gamma$ & Exponent for integration radius $r_0 = t^\gamma$, $\gamma \in (1/4,1/2)$ \\
$\eta$ & Multiplicative error tolerance in moment bounds \\
$c$ & Concentration margin, controls strength via $\eta$ \\
$P_t$ & Idealized expected kernel sum $2^{d/2}p(x)\pi^{d/2}t^{d/2} \approx \E[K_{2t}]$ \\
$\alpha(\hat t)$ & Gaussian tail fraction, controlling incomplete integration over $B_{r_0}(x)$ \\
$\beta_{d,\eta}$ & Gaussian concentration threshold, $\sqrt{2}(\sqrt{d}+\sqrt{2\log(10/\eta)})$ \\
$W^\pm_i$ & $K_{2t}(x,X_i) - 2^{(d \pm \varepsilon)/2}K_t(x,X_i)$, concentration random variables \\
$\varepsilon^*$ & Critical resolution $\sim 1/\!\sqrt{nP_t}$ \\
$t_0$ & Bandwidth threshold for multiplicative bounds (\cref{lem:mult}) \\[4pt]
\multicolumn{2}{@{}l}{\textit{Anti-concentration bounds (\cref{sec:anti})}} \\[2pt]
$Y_\pm$ & $-W^\pm_i$, anti-concentration random variables \\
$\Gamma_\pm$ & Variance constants (\cref{lem:anticoncentration_gaussian}) \\
$\Delta_\pm$ & Third moment constants (\cref{lem:anticoncentration_gaussian}) \\
$\eta^*_\pm$ & Adjusted tolerance parameters \\
\bottomrule
\end{tabular}
\end{table}

\subsection{Concentration bounds}\label{sec:gaussian}
With the setup of the previous section we proceed with our main theoretical results. The first is a concentration inequality, and the main tool we use is Bernstein's inequality (\cref{thm:bernstein} in the appendix).

The major work of this section consists in finding bounds on the expectation and variance of certain random variables whose structure emerges from decomposing the tail probabilities $\P\left(\hat d - d \geq \varepsilon \right)$ and $\P\left(\hat d - d \leq -\varepsilon \right)$, where $\varepsilon > 0$ is our target precision. If we define
%\begin{lemma}\label{lem:decomp}
\begin{equation*}
    \begin{split}
W^+_i &= K_{2t}(x,X_i) - 2^{(d + \varepsilon)/2}K_t(x,X_i) \\%\label{eq:wplus_i} \\
W^-_i &= K_{2t}(x,X_i) - 2^{(d - \varepsilon)/2}K_t(x,X_i). %\label{eq:wminus_i}.
    \end{split}
\end{equation*}
Then, by direct calculation,
\begin{align}
    \P\left(\hat d - d\geq \varepsilon\right) 
    &= \P\left(\frac{2}{\log 2}\log\frac{S(x,2t)}{S(x,t)}-d \geq \varepsilon\right) \nonumber \\
    &= \P\left(S(x,2t)-2^{(d+\varepsilon)/2}S(x,t) \geq 0 \right) \nonumber \\
    &= \P\left(\frac{1}{n}\sum_{i=1}^n W^+_i - \E[W^+_i] \geq -\E[W^+_i]\right) \label{eq:gauss:upper_i}
\end{align} 
and similarly for the lower bound, giving us
\begin{equation}\label{eq:gauss:lower_i}
     \P\left(\hat d - d \leq -\varepsilon\right) = \P\left(\frac{1}{n}\sum_{i=1}^n W^-_i - \E[W^-_i] \leq -\E[W^-_i]\right).
\end{equation}
In this form, the probabilities we need to bound are linear in $X_1,\dots,X_n.$ From the preceding, we see that we need to understand 
\begin{align}
W^+ &= K_{2t}(x,X) - 2^{(d + \varepsilon)/2}K_t(x,X) \label{eq:wplus} \\
\intertext{and}
W^- &= K_{2t}(x,X) - 2^{(d - \varepsilon)/2}K_t(x,X) \label{eq:wminus},
\end{align}
where $X \sim p$.

In order to apply Bernstein's inequality we need to control expectation and variance of $W^+$ and $W^-$. We do this by controlling the moments of $K_{t}$ and $K_{2t}$. To achieve the latter, a useful property of Gaussian kernels is that products satisfy
\begin{equation}\label{eq:mom}
    K_a(x,y) K_b(x,y) = K_{ab/(a+b)}(x,y),
\end{equation}
and in particular $K_a(x,y)^m = K_{a/m}(x,y)$. 

The following lemma addresses how the geometric parameters $L,M,r$, Lipschitz constant $\kappa$, and chosen bandwidth affect deviation from the ideal scaling of \cref{eq:EK_t}:
\begin{lemma}[General Moment Bounds]\label{lem:general}
    Fix $x \in \Omega$ with $\mathrm{dist}(x,\partial \Omega) > 0$ (or $\partial\Omega = \emptyset$). Let $\gamma \in \left(\frac{1}{4},\frac{1}{2}\right)$, denote $r_0 =  t^\gamma$ and assume $X \sim p$. 
    Then for any $t > 0$ satisfying $r_0 = t^\gamma \leq \min\{r,\mathrm{dist}(x,\partial \Omega)\}$,
    \begin{align}\label{eq:mom:general:lb}
        \E[K_{ t}(x,X)] \geq \left(1-\frac{2\kappa r_0}{p(x)}\right)(1-Mr_0^2)(1-\alpha(t))e^{-L^2r_0^4/ t}p(x)\pi^{d/2}t^{d/2}
    \end{align}
    where $\alpha(t) = 1 - \frac{\int_{\|z\|\leq r_0}e^{-\|z\|^2/t}\,dz}{\pi^{d/2}t^{d/2}}$, and
    \begin{align}\label{eq:mom:general:ub}
        \E[K_{ t}(x,X)] \leq\left(1+\frac{2\kappa r_0}{p(x)}\right)(1+Mr_0^2)p(x)\pi^{d/2}t^{d/2} + \tau( t),
    \end{align}
    where $\tau( t) = e^{-r_0^2/ t}$.
\end{lemma}

\begin{remark}\label{rem:gamma}
An optimal choice of $\gamma$ depends on the parameters involved. In our analysis, $\tau(t)$ is a tail term, and large $\gamma$ implies a small tail term but at the same time it increases error accumulation from other factors. 
\end{remark}

To achieve more manageable expressions in our later bounds, we introduce an auxiliary variable $0<\eta<1/2$,  which allows us to express our bounds in a more simple form. This introduces somewhat stricter conditions on $t$.

We need the bounds to hold uniformly over several moments of $K_t$, which is equivalent to bounds of $K_t$ over several values of $t$. In the following lemma, we find a threshold $t_0$ that gives such uniform control.

\begin{lemma}[Multiplicative Bounds]\label{lem:mult}
Assume the conditions of \cref{lem:general} and let $\eta\in(0,1/2)$. There exists a threshold $t_0 = t_0(\eta,L,M,\kappa,\gamma,d,p(x)) > 0$, given explicitly in \cref{sec:app_main}, such that for any $t\le t_0$ and $\hat t\in\mathcal H=\{t,t/2,2t,2t/3\}$,
\begin{equation}\label{eq:mom:mult}
(1-\eta)\,p(x)\pi^{d/2}\hat t^{d/2}
\le
\E\!\left[K_{\hat t}(x,X)\right]
\le
(1+\eta)\,p(x)\pi^{d/2}\hat t^{d/2}.
\end{equation}
\end{lemma}
\begin{remark}
  The threshold $t_0$ is the largest bandwidth at which five constraints are simultaneously satisfied, controlling curvature ($L$), volume distortion ($M$), density variation ($\kappa$), and the tail error ($\gamma,\alpha$). Flatter geometry and smoother density permit larger $t_0$, improving concentration. See \cref{sec:app_main} for the explicit formula and computational details.
  \end{remark}
Then we have a corollary which gives us the necessary control on $\E \left[W^\pm\right]$ and $\Var \left( W^\pm \right)$. Before stating the result, let us for convenience define
\begin{equation}\label{eq:P_t}
    P_t \coloneqq 2^{d/2}p(x)\pi^{d/2}t^{d/2} 
    \approx \mathbb{E}[K_{2t}(x,X)],
\end{equation}
Thus,
\begin{corollary}[Properties of $W^+$ and $W^-$]\label{cor:w}
    Assume the conditions of \cref{lem:mult}. Further, let $0 < c <1$ and $\eta = c\frac{2^{\varepsilon/2}-1}{1+2^{\varepsilon/2}}$. Then
    \begin{align*}
       \E[W^+] &\leq (1-c)(1-2^{\varepsilon/2})P_t < 0, \\
        \E[W^-] &\geq (1-c)(1-2^{-\varepsilon/2})P_t>0.
    \end{align*}
    Furthermore,
    \begin{equation*}
        \begin{split}
        \Var(W^+) &\leq 2^{\varepsilon+3}P_t \\
        \Var(W^-) &\leq  2^{-\varepsilon+3} P_t.
        \end{split}
    \end{equation*}
\end{corollary}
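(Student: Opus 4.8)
The plan is to reduce all four assertions to the two‑sided moment bound \eqref{eq:mom:mult} of \cref{lem:mult}, evaluated at the four bandwidths in $\mathcal H=\{t,t/2,2t,2t/3\}$ — which is exactly why those four values were built into that lemma. Write $Q\coloneqq p(x)\pi^{d/2}t^{d/2}$, so that $P_t=2^{d/2}Q$. A preliminary point to record is that the prescribed $\eta=c\,\frac{2^{\varepsilon/2}-1}{1+2^{\varepsilon/2}}$ lies in $(0,1/2)$ (it is positive since $c>0$ and $\varepsilon>0$, and $<1/2$ at least in the regime $\varepsilon<2\log_2 3$ of interest, since $c<1$), so that \cref{lem:mult} is applicable; we will also use the algebraic identity $\frac{2^{\varepsilon/2}-1}{1+2^{\varepsilon/2}}=\frac{1-2^{-\varepsilon/2}}{1+2^{-\varepsilon/2}}$ for the $W^-$ bound.

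For the expectations, expand $\E[W^+]=\E[K_{2t}(x,X)]-2^{(d+\varepsilon)/2}\E[K_t(x,X)]$ and apply \eqref{eq:mom:mult} with the upper bound on $\E[K_{2t}]$ and the lower bound on $\E[K_t]$ (the latter having a negative coefficient). This gives $\E[W^+]\le\bigl[(1+\eta)-2^{\varepsilon/2}(1-\eta)\bigr]P_t=\bigl[(1-2^{\varepsilon/2})+\eta(1+2^{\varepsilon/2})\bigr]P_t$. Since $P_t>0$, the claim $\E[W^+]\le(1-c)(1-2^{\varepsilon/2})P_t$ is then equivalent to $\eta(1+2^{\varepsilon/2})\le c(2^{\varepsilon/2}-1)$, which is precisely the defining identity for $\eta$; strict negativity follows from $c<1$ and $\varepsilon>0$. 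The bound for $\E[W^-]$ is the mirror image: use the lower bound on $\E[K_{2t}]$ and the upper bound on $\E[K_t]$, rewrite the resulting threshold via $\frac{2^{\varepsilon/2}-1}{1+2^{\varepsilon/2}}=\frac{1-2^{-\varepsilon/2}}{1+2^{-\varepsilon/2}}$, and read off $\E[W^-]\ge(1-c)(1-2^{-\varepsilon/2})P_t>0$.

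For the variances I would use $\Var(W^\pm)\le\E[(W^\pm)^2]$ and expand the square via the multiplicativity identity \eqref{eq:mom}: with $a_\pm\coloneqq2^{(d\pm\varepsilon)/2}$ it gives $K_{2t}^2=K_t$, $K_{2t}K_t=K_{2t/3}$, and $K_t^2=K_{t/2}$, hence $(W^\pm)^2=K_t-2a_\pm K_{2t/3}+a_\pm^2 K_{t/2}$. Taking expectations, dropping the (nonnegative-in-expectation) middle term, and applying the upper part of \eqref{eq:mom:mult} to $\E[K_t]$ and $\E[K_{t/2}]$ yields $\E[(W^+)^2]\le(1+\eta)\bigl(Q+2^{d+\varepsilon}2^{-d/2}Q\bigr)=(1+\eta)\bigl(2^{-d/2}+2^{\varepsilon}\bigr)P_t$, and likewise $\E[(W^-)^2]\le(1+\eta)\bigl(2^{-d/2}+2^{-\varepsilon}\bigr)P_t$. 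These are then absorbed into the deliberately slack factors $2^{\varepsilon+3}$ and $2^{-\varepsilon+3}$ using $\eta<1/2$ together with $2^{-d/2}\le1\le2^{\varepsilon}$ for $W^+$ (so $(1+\eta)(2^{-d/2}+2^\varepsilon)\le 3\cdot2^\varepsilon\le2^{\varepsilon+3}$), and the analogous $2^{-d/2}\le2^{-\varepsilon}$ (valid when $\varepsilon\le d/2$, which holds in the relevant precision regime) for $W^-$.

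The proof has no conceptual obstacle — all the geometry is already packaged in \cref{lem:general,lem:mult}, and everything here is linear in a handful of moments of Gaussian kernels. The points that need care are purely bookkeeping: choosing the correct one‑sided direction of \eqref{eq:mom:mult} according to the sign of each coefficient; keeping track of the $2^{d/2}$ factors relating $Q$ and $P_t$; verifying $\eta\in(0,1/2)$ before invoking \cref{lem:mult}; and checking that the crude variance estimates actually fit inside the $2^{\pm\varepsilon+3}$ slack, which is the only place where a mild bound on $\varepsilon$ relative to $d$ is convenient.
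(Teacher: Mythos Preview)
Your proof is correct and follows essentially the same route as the paper: the expectation bounds are identical to the paper's, and the variance bounds use the same reduction $\Var(W^\pm)\le\E[(W^\pm)^2]$ together with the kernel identities $K_{2t}^2=K_t$, $K_tK_{2t}=K_{2t/3}$, $K_t^2=K_{t/2}$ and \cref{lem:mult}. The only difference is in handling the cross term: the paper bounds $(K_{2t}-a_\pm K_t)^2\le(K_{2t}+a_\pm K_t)^2$ and keeps the $K_{2t/3}$ contribution with a positive sign, whereas you drop the (nonpositive) cross term entirely---your intermediate bound is therefore slightly tighter, though both fit inside the deliberately slack $2^{\pm\varepsilon+3}$ factor under the same mild constraint on~$\varepsilon$ (implicit in the paper, which just writes ``similarly'' for $W^-$).
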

\begin{remark}
The condition on $\eta$ ensures $\E[W^\pm]$ have the correct sign with sufficient margin (controlled by $c$) to apply Bernstein's inequality. Smaller $c$ gives stricter conditions on $t$.
\end{remark}

Now we can state and prove the main result of this section:
\begin{theorem}[Concentration of Dimension Estimator]\label{thm:main}
% Let $\Omega \subset \R^N$ be a compact submanifold with intrinsic dimension $d$ that is locally $(L,M,r)$-regular at $x$. Let $p$ be a $\kappa$-Lipschitz density on $\Omega$.
% Assume the same conditions on  $x$, $t$ and $\eta$ as the preceding lemmas and corollary. Then
Under the conditions of \cref{lem:mult,cor:w}, we have
\begin{equation*}
    \P(\hat d - d\geq \varepsilon) \leq \exp\left(\frac{-nc_+^2 P_t}{2^{4+\varepsilon} + \frac{2}{3}M_{d,\varepsilon}c_+}\right)
\end{equation*}
and
\begin{equation*}
    \P(\hat d - d\leq -\varepsilon) \leq  \exp\left(\frac{-nc_-^2 P_t}{2^{4-\varepsilon} + \frac{2}{3}M_{d,-\varepsilon}c_-}\right),
\end{equation*}
where $M_{d,\pm\varepsilon} = 1+2^{(d\pm\varepsilon)/2}$, $c_+=(1-c)(2^{\varepsilon/2}-1)$ and  $c_-=(1-c)(1-2^{-\varepsilon/2})$.
\end{theorem}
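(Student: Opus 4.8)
The plan is to apply Bernstein's inequality directly to the normalized sums in \cref{eq:gauss:upper_i} and \cref{eq:gauss:lower_i}, using \cref{cor:w} to supply the needed control on the mean and variance. Concretely, for the upper tail I would work with the i.i.d.\ variables $Y_i^+ \coloneqq \tfrac{1}{n}(W_i^+ - \E[W_i^+])$, whose sum is $S^+ \coloneqq \tfrac1n\sum_i W_i^+ - \E[W^+]$, so that $\P(\hat d - d \geq \varepsilon) = \P(S^+ \geq -\E[W^+])$. Since $-\E[W^+] \geq (1-c)(2^{\varepsilon/2}-1)P_t = c_+ P_t > 0$ by \cref{cor:w}, it suffices to bound $\P(S^+ \geq c_+ P_t)$ — monotonicity of the tail means a larger threshold only helps.

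First I would establish the almost-sure bound $b$ on $|W^+ - \E[W^+]|$. Since $0 \leq K_t, K_{2t} \leq 1$ pointwise, we have $-2^{(d+\varepsilon)/2} \leq W^+ \leq 1$, so $|W^+| \leq 1 + 2^{(d+\varepsilon)/2} = M_{d,\varepsilon}$, and hence $|W^+ - \E[W^+]| \leq 2M_{d,\varepsilon}$; more carefully, since $W^+$ lies in an interval of length $1 + 2^{(d+\varepsilon)/2} = M_{d,\varepsilon}$, we may in fact take $b = M_{d,\varepsilon}$ (the deviation from the mean of a variable confined to an interval of length $\ell$ is at most $\ell$). Rescaled by $1/n$, the per-summand bound becomes $b/n = M_{d,\varepsilon}/n$. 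For the variance, $v = \sum_{i=1}^n \Var(Y_i^+) = \tfrac1n \Var(W^+) \leq \tfrac{1}{n} 2^{\varepsilon+3} P_t = \tfrac{2^{4+\varepsilon}}{2n} P_t$. Plugging $\varepsilon' = c_+ P_t$, $v \leq \tfrac{2^{4+\varepsilon} P_t}{2n}$, and $b/n = M_{d,\varepsilon}/n$ into Bernstein's inequality $\P(S \geq \varepsilon') \leq \exp\!\big(\tfrac{-\varepsilon'^2}{2(v + b\varepsilon'/3)}\big)$ gives
\begin{equation*}
\P(S^+ \geq c_+ P_t) \leq \exp\!\left(\frac{-(c_+ P_t)^2}{2\left(\frac{2^{4+\varepsilon} P_t}{2n} + \frac{M_{d,\varepsilon} c_+ P_t}{3n}\right)}\right) = \exp\!\left(\frac{-n c_+^2 P_t}{2^{4+\varepsilon} + \frac{2}{3} M_{d,\varepsilon} c_+}\right),
\end{equation*}
after cancelling one factor of $P_t$ and simplifying; this is exactly the claimed bound. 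The lower tail is handled symmetrically: set $Y_i^- = -\tfrac1n(W_i^- - \E[W^-])$, note $\P(\hat d - d \leq -\varepsilon) = \P(\sum_i Y_i^- \geq \E[W^-])$ with $\E[W^-] \geq c_- P_t > 0$, use $\Var(W^-) \leq 2^{-\varepsilon+3} P_t$ and $|W^-|$ confined to an interval of length $M_{d,-\varepsilon} = 1 + 2^{(d-\varepsilon)/2}$, and repeat the computation.

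There is no real obstacle here — all the hard analytic work has been done upstream in \cref{lem:general,lem:mult} and \cref{cor:w}. The only points requiring a little care are: (i) getting the almost-sure envelope $b = M_{d,\pm\varepsilon}$ rather than the crude $2M_{d,\pm\varepsilon}$, by using that centering a bounded variable does not enlarge its range beyond the original interval length; (ii) correctly tracking the factor of $\tfrac1n$ from the normalization of $S(x,t)$ through both $v$ and $b$; and (iii) the algebraic cancellation of $P_t$ that turns the threshold $c_+ P_t$ and variance $O(P_t/n)$ into the clean exponent $-nc_+^2 P_t / (2^{4+\varepsilon} + \tfrac23 M_{d,\varepsilon} c_+)$. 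I would also remark that the bound is only non-vacuous when $c$ is chosen so that $\eta = c\tfrac{2^{\varepsilon/2}-1}{1+2^{\varepsilon/2}} < 1/2$ and $t \leq t_0(\eta,L,M,K,\gamma,d,p)$, which is precisely the standing hypothesis inherited from \cref{lem:mult,cor:w}.
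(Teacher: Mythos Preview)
Your proposal is correct and essentially identical to the paper's own proof: both reduce to the linear decompositions \cref{eq:gauss:upper_i,eq:gauss:lower_i}, read off the interval bound $|W^\pm-\E[W^\pm]|\le M_{d,\pm\varepsilon}$ from $-2^{(d\pm\varepsilon)/2}\le W^\pm\le 1$, and plug the mean and variance estimates of \cref{cor:w} into Bernstein's inequality. The only cosmetic difference is that you carry the $1/n$ normalization through $v$ and $b$, while the paper applies Bernstein to the unnormalized sum with threshold $n c_\pm P_t$; the algebra coincides.
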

Using \cref{cor:w}, the proof of \cref{thm:main} is straightforward:
\begin{proof}[Proof of \cref{thm:main}]
    First we recall the decompositions \cref{eq:gauss:upper_i,eq:gauss:lower_i} and variables \cref{eq:wplus,eq:wminus}. Then we observe that
    \begin{align*}
        -2^{(d+\varepsilon)/2} &\leq K_{2t}(x,X)-2^{(d+\varepsilon)/2}K_t(x,X) \leq 1, \\
        -2^{(d-\varepsilon)/2} &\leq K_{2t}(x,X)-2^{(d-\varepsilon)/2}K_t(x,X) \leq 1,
    \end{align*}
    and thus $|W^{\pm} - \E[W^{\pm}]| \leq 1+2^{(d\pm \varepsilon)/2} = M_{d,\pm\varepsilon}$.

    Using our estimates from \cref{cor:w}, a direct application of Bernstein's inequality to \cref{eq:gauss:upper_i} gives
    \begin{align*}
        \P\left(\hat d - d\geq \varepsilon\right) &\leq \exp\left(\frac{-n\left(c_+ P_t\right)^2}{2\left(2^{\varepsilon+3}P_t+M_{d,\varepsilon}\left(c_+ P_t\right)/3\right)}\right)  \\
        & = \exp\left(\frac{-nc_+^2 P_t}{2^{4+\varepsilon} + \frac{2}{3}M_{d,\varepsilon}c_+}\right),
    \end{align*}
    and similarly for $-\varepsilon$ in \cref{eq:gauss:lower_i},
    \begin{equation*}
        \begin{split}
        \P\left(\hat d - d\leq -\varepsilon\right) &\leq \exp\left(\frac{-n \left(c_- P_t\right)^2}{2\left(2^{-\varepsilon+3}P_t+M_{d,-\varepsilon}\left(c_- P_t\right)/3\right)}\right)  \\
        & = \exp\left(\frac{-nc_-^2 P_t}{2^{4-\varepsilon} + \frac{2}{3}M_{d,-\varepsilon}c_-}\right).
        \end{split}
    \end{equation*}
\end{proof}

\begin{remark}
    Computing $t_0$ of \cref{lem:mult} requires knowing $L,M,\kappa,d,p(x)$. In practice, these must be estimated from data or conservatively bounded. Our contribution is showing \emph{how} geometry is affected by these parameters.
\end{remark}

\begin{remark}[Flat manifolds]
When $\Omega$ has zero curvature ($L = M = 0$ in \cref{def:LMr-regular}), the curvature and volume distortion constraints in $t_0$ (see \cref{eq:t0_explicit}) vanish, leaving only the tail decay and density variation. For uniform density ($\kappa = 0$), only the tail decay conditions apply. This allows larger bandwidths than in the curved case, improving the concentration rate. This is consistent with the improved concentration observed for flat manifolds in \cref{fig:concentration_ball} compared to curved manifolds in \cref{fig:concentration_sphere}.
\end{remark}

\begin{remark}[Small-$\varepsilon$ behavior]\label{rem:small_eps_conc}
For small $\varepsilon$, both $c_\pm \approx (1-c)\frac{\varepsilon\log 2}{2}$
and the denominators in \cref{thm:main} are dominated by their
constant terms, giving
\[
\P(|\hat d(x,t) - d| \geq \varepsilon)
  \leq 2\exp\!\left(-\frac{(1-c)^2(\log 2)^2}{2^6}
       \,nP_t\,\varepsilon^2\,(1+O(\varepsilon))\right).
\]
Together with \cref{rem:small_eps}, this identifies
$\varepsilon^* \sim 1/\sqrt{nP_t}$ as the critical resolution.
When $\varepsilon \gg \varepsilon^*$ the estimator concentrates
exponentially, while for $\varepsilon \ll \varepsilon^*$
reliable estimation at precision $\varepsilon$ becomes impossible.
\end{remark}

\subsection{Anti-concentration}\label{sec:anti}
\cref{thm:main} provides an upper bound on the probability that $\hat d$ deviates from $d$. In this section we prove a complementary result that establishes when our estimator \emph{cannot} reliably distinguish $d$ from nearby values. Specifically, we seek an upper bound on
\begin{equation}\label{eq:anti}
    \P\left(|\hat d - d| \leq \varepsilon\right).
\end{equation}

We work with the same theoretical setup and assumptions made on $\Omega$ and $X_1,\dots, X_n$ as in \cref{sec:method}. While dimension estimators can often output noninteger (fractional) dimensions, our anti-concentration proofs require integer $d$. This is because they rely on manifold structure; they specifically require integration over the $d$-dimensional tangent space $T_x\Omega \cong \R^d$.

In order to bound \cref{eq:anti}, we use normal approximation through the Berry-Esseen theorem (\cref{thm:BE-iid} in the appendix), which controls the approximation error in terms of the mean $\mu$, variance $\sigma^2$, and third absolute moment $\rho$ of certain summands. Similarly to the proof of \cref{thm:main}, the main work thus consists in finding bounds on these quantities for appropriate random variables.

First we rewrite $ \P\left(|\hat d - d| \leq \varepsilon\right)$,
\begin{equation}\label{eq:gaussian:anti_decomp}
    \begin{split}
    \P(|\hat d - d| \leq \varepsilon) =& \P\left(\abs{\frac{2}{\log(2)} \log\left(\frac{S(x,2t)}{S(x,t)}\right) - d} \leq \varepsilon\right) \\
    =& \P\left(\frac{S(x,2t)}{S(x,t)} \in \left[2^{d/2-\varepsilon/2}, 2^{d/2+\varepsilon/2}\right]\right) \\
    =& \P\left(\frac{S(x,2t)}{S(x,t)} \geq 2^{d/2-\varepsilon/2} \right) - \P\left(\frac{S(x,2t)}{S(x,t)} \geq 2^{d/2+\varepsilon/2} \right) \\
    =& \P\left(\frac{S(x,2t)}{S(x,t)} \geq 2^{d/2-\varepsilon/2} \right) - \left(1 -\P\left(\frac{S(x,2t)}{S(x,t)} \leq 2^{d/2+\varepsilon/2} \right)\right) \\
    =&  \P\left(2^{(d-\varepsilon)/2} S(x,t) - S(x,2t) \leq 0 \right)
 \\
 & + \P\left(2^{(d+\varepsilon)/2} S(x,t) - S(x,2t) \geq 0 \right) - 1.
    \end{split}
\end{equation}
Similar to the previous section, we need to understand the random variables
\begin{equation}\label{eq:Y_plus_minus}
    \begin{split}
Y_+ &= 2^{(d+\varepsilon)/2} K_t(x,X) - K_{2t}(x,X) \\
Y_- &=2^{(d-\varepsilon)/2} K_t(x,X) - K_{2t}(x,X), 
    \end{split}
\end{equation}
where $X \sim p$.
To prove the main theorem of this section, the strategy is to use normal approximation on the decomposition \cref{eq:gaussian:anti_decomp}, and to do this we need bounds on $\mu, \sigma^2$ and $\rho$ for the random variables above, $Y_+,Y_-$.
\begin{lemma}\label{lem:anticoncentration_gaussian}
    Let $Y_+, Y_-$ be as in \cref{eq:Y_plus_minus}. Define
    \begin{align*}
        \Gamma_\pm &= 1-2^{1\mp\varepsilon/2}\left(\frac{2}{3}\right)^{d/2}+2^{-d/2\mp\varepsilon}, \\
        \Delta_\pm &= \left(\frac{2}{3}\right)^{d/2} + 3 \cdot 2^{d/2\mp\varepsilon/2}5^{-d/2} + 3 \cdot 2^{\mp\varepsilon-d} + 2^{\mp 3\varepsilon/2-d/2} 3^{-d/2}.
    \end{align*}
    Assume $\Gamma_+ > 0$ and let $c \in (0,1)$ satisfy
    \begin{equation}
        \eta \coloneqq c\frac{2^{\varepsilon/2}-1}{1+2^{\varepsilon/2}} < \frac{\Gamma_+}{1+2^{1-\varepsilon/2}(2/3)^{d/2}+2^{-d/2-\varepsilon}},
    \end{equation}
    and define $\eta^{*}_\pm = \eta\frac{1+2^{1\mp\varepsilon/2}(2/3)^{d/2}+2^{-d/2\mp\varepsilon}}{\Gamma_\pm}$.

    Then, for $t$ sufficiently small:
    \begin{align*}
        \E[Y_+] &\leq (2^{\varepsilon/2}-1)(1+c)P_t, \\
        \E[-Y_-] &\leq (1-2^{-\varepsilon/2})(1+c)P_t, \\
        \Var(Y_\pm) &\geq 2^{\pm\varepsilon}\Gamma_\pm(1-\eta^{*}_\pm)P_t + \bigO(P_t^2), \\
        \E[|Y_\pm -\E[Y_\pm]|^3] &\leq (1+\eta)\Delta_\pm 2^{d/2\pm 3\varepsilon/2}P_t + \bigO(P_t^2).
    \end{align*}
    The condition on $\eta$ ensures $\eta^*_\pm < 1$.
\end{lemma}
\begin{remark}
The condition ``$t$ sufficiently small'' means the multiplicative bounds of \cref{lem:mult} hold for the extended set $\hat{t} \in \{t/3, 2t/5, t/2, 2t/3, t, 2t\}$. The third moment bounds require $K_t^3 = K_{t/3}$ and $K_t^2 K_{2t} = K_{2t/5}$, which are not in the original set $\mathcal{H}$. This will potentially yield a slightly smaller threshold $t_0$ than in \cref{lem:mult}.
\end{remark}

Now we have all the bounds we need, and can apply Berry-Esseen:

\begin{corollary}[Berry-Esseen bound]\label{cor:berry_esseen} 
    Define 
    $Y_{i,\pm} = 2^{(d\pm\varepsilon)/2} K_t(x,X_i) - K_{2t}(x,X_i)$ for $i=1,\ldots,n$.
    Under the assumptions of \cref{lem:anticoncentration_gaussian}, 
    % \begin{equation*}
    %     \sup_{x\in\mathbb{R}}|F_{n,\pm}(x)-\Phi(x)| \leq \frac{C(1+\eta)\Delta_{\pm}2^{d/2}}{\sqrt{n}\sqrt{P_t}(1-\eta^{*}_{\pm})^{3/2}\Gamma_{\pm}^{3/2}} + O\left(\sqrt{\frac{P_t}{n}}\right)
    % \end{equation*}
    \begin{multline*}
        \sup_{x\in\mathbb{R}}\left|\P\left(\frac{\sum_{i=1}^n (Y_{i,\pm} - \E[Y_{i,\pm}])}{\sqrt{n\Var(Y_{i,\pm})}} \leq x\right) - \Phi(x)\right| 
        \leq \frac{C(1+\eta)\Delta_{\pm}2^{d/2}}{\sqrt{n}\sqrt{P_t}(1-\eta^{*}_{\pm})^{3/2}\Gamma_{\pm}^{3/2}} \\
         + O\left(\sqrt{\frac{P_t}{n}}\right)
    \end{multline*} 
    where $C \leq 0.4748$ is the Berry-Esseen constant.
\end{corollary}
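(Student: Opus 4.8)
\textbf{Proof proposal for Corollary~\ref{cor:berry_esseen}.}

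The plan is to apply the i.i.d.\ Berry--Esseen theorem (\cref{thm:BE-iid}) directly to the random variables $Y_{i,\pm} = 2^{(d\pm\varepsilon)/2} K_t(x,X_i) - K_{2t}(x,X_i)$, and then substitute the moment bounds from \cref{lem:anticoncentration_gaussian} into the resulting error term $C\rho/(\sigma^3\sqrt{n})$. The $Y_{i,\pm}$ are i.i.d.\ copies of $Y_\pm$ (since the $X_i$ are i.i.d.), so $\mu = \E[Y_\pm]$, $\sigma^2 = \Var(Y_\pm)$, and $\rho = \E[|Y_\pm - \E[Y_\pm]|^3]$ are exactly the quantities controlled by the lemma. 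The normalization in the statement matches $F_n$ from \cref{thm:BE-iid} after noting $\tfrac1n\sum (Y_{i,\pm}-\E Y_{i,\pm})$ divided by $\sigma/\sqrt n$ equals $\sum(Y_{i,\pm}-\E Y_{i,\pm})/\sqrt{n\Var(Y_{i,\pm})}$. Hence $\sup_x |\P(\cdots \le x) - \Phi(x)| \le C\rho/(\sigma^3\sqrt n)$, and it remains to bound $\rho/\sigma^3$.

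First I would invoke \cref{lem:anticoncentration_gaussian} to write $\sigma^2 = \Var(Y_\pm) \ge 2^{\pm\varepsilon}\Gamma_\pm(1-\eta^*_\pm)P_t + \bigO(P_t^2)$ and $\rho = \E[|Y_\pm-\E Y_\pm|^3] \le (1+\eta)\Delta_\pm 2^{d/2\pm 3\varepsilon/2}P_t + \bigO(P_t^2)$. Taking $\sigma^3 \ge (2^{\pm\varepsilon}\Gamma_\pm(1-\eta^*_\pm)P_t)^{3/2}(1 + \bigO(P_t))$, the ratio becomes
\[
\frac{\rho}{\sigma^3} \le \frac{(1+\eta)\Delta_\pm 2^{d/2\pm 3\varepsilon/2}P_t}{2^{\pm 3\varepsilon/2}\Gamma_\pm^{3/2}(1-\eta^*_\pm)^{3/2}P_t^{3/2}}\bigl(1+\bigO(\sqrt{P_t})\bigr) = \frac{(1+\eta)\Delta_\pm 2^{d/2}}{\Gamma_\pm^{3/2}(1-\eta^*_\pm)^{3/2}\sqrt{P_t}}\bigl(1+\bigO(\sqrt{P_t})\bigr),
\]
where the factors $2^{\pm 3\varepsilon/2}$ cancel between numerator and denominator. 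Dividing by $\sqrt n$ and multiplying by $C$ produces the leading term $C(1+\eta)\Delta_\pm 2^{d/2}/(\sqrt n \sqrt{P_t}(1-\eta^*_\pm)^{3/2}\Gamma_\pm^{3/2})$, and the correction $(1+\bigO(\sqrt{P_t}))$ spread over the $C\sqrt n^{-1}$ factor contributes $\bigO(\sqrt{P_t/n})$, matching the stated error term.

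The only delicate point is handling the lower-order $\bigO(P_t^2)$ corrections in $\sigma^2$ and $\rho$ rigorously: one must expand $\sigma^3 = \sigma^{2\cdot 3/2}$ using $(a+b)^{3/2} = a^{3/2}(1+b/a)^{3/2}$ with $a = 2^{\pm\varepsilon}\Gamma_\pm(1-\eta^*_\pm)P_t$ and $b = \bigO(P_t^2)$, so $b/a = \bigO(P_t)$, which is valid precisely because the condition on $\eta$ guarantees $\Gamma_\pm(1-\eta^*_\pm) > 0$ and hence $a$ is bounded below by a positive multiple of $P_t$. Since $P_t = 2^{d/2}p(x)\pi^{d/2}t^{d/2} \to 0$ as $t \to 0$, for $t$ sufficiently small all these expansions converge and the error bookkeeping goes through; I expect this asymptotic bookkeeping — rather than any conceptual step — to be the main (though routine) obstacle.
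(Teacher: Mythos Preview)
Your proposal is correct and follows essentially the same route as the paper: apply Berry--Esseen directly, substitute the variance lower bound and third-moment upper bound from \cref{lem:anticoncentration_gaussian}, observe the cancellation of the $2^{\pm 3\varepsilon/2}$ factors, and absorb the $\bigO(P_t^2)$ corrections via a first-order expansion. One small slip: your relative correction in $\rho/\sigma^3$ should be $(1+\bigO(P_t))$ rather than $(1+\bigO(\sqrt{P_t}))$ (as your own expansion of $\sigma^3$ shows), which is what is actually needed to land on the stated $\bigO(\sqrt{P_t/n})$ remainder rather than merely $\bigO(1/\sqrt{n})$.
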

Now we are ready to state the main result on anti-concentration, which follows from \cref{cor:berry_esseen}:
\begin{theorem}[Anti-concentration for Gaussian Kernel Estimator]\label{thm:anticonc:gaussian}
    Let $\Phi(x)$ be the CDF of the standard normal distribution and $\varepsilon>0$.
    Under the conditions of \cref{lem:anticoncentration_gaussian}, we have
    \begin{multline}\label{eq:anti_conc}
    \P(|\hat d(x,t) - d| \leq \varepsilon) \leq
    \Phi\left(\frac{(2^{\varepsilon/2}-1)(1+c)\sqrt{P_t n}}{2^{\varepsilon/2}\sqrt{\Gamma_+(1-\eta^*_+)}}\right) \\
    + \Phi\left(\frac{(1-2^{-\varepsilon/2})(1+c)\sqrt{P_t n}}{2^{-\varepsilon/2}\sqrt{\Gamma_-(1-\eta^*_-)}}\right) \\
    + \frac{C(1+\eta)2^{d/2}}{\sqrt{P_tn}}\left(\frac{\Delta_+}{(1-\eta^{*}_+)^{3/2}\Gamma_+^{3/2}} + \frac{\Delta_-}{(1-\eta^{*}_-)^{3/2}\Gamma_-^{3/2}}\right)
    + O\left(\sqrt{\frac{P_t}{n}}\right) - 1
    \end{multline}
    where $C \leq 0.4748$ is the Berry-Esseen constant, $c, \eta, \Gamma_\pm, \eta^*_\pm, \Delta_\pm$ are as defined in \cref{lem:anticoncentration_gaussian}, and $P_t$ is defined in \cref{eq:P_t}.
\end{theorem}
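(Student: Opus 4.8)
The plan is to combine the decomposition \cref{eq:gaussian:anti_decomp} with the Berry--Esseen bound of \cref{cor:berry_esseen}. Recall that \cref{eq:gaussian:anti_decomp} gives
\[
\P(|\hat d - d|\le \varepsilon)
= \P\!\left(\textstyle\sum_i Y_{i,-}\le 0\right)
+ \P\!\left(\textstyle\sum_i Y_{i,+}\ge 0\right) - 1,
\]
where $Y_{i,\pm} = 2^{(d\pm\varepsilon)/2}K_t(x,X_i)-K_{2t}(x,X_i)$, so $\sum_i Y_{i,-}\le 0$ is exactly $2^{(d-\varepsilon)/2}S(x,t)-S(x,2t)\le 0$ and similarly for the $+$ term. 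The strategy is to standardize each sum, replace the resulting CDF by $\Phi$ at a cost controlled by \cref{cor:berry_esseen}, and then bound the Gaussian terms by monotonicity using the moment estimates of \cref{lem:anticoncentration_gaussian}.

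First I would handle the $+$ term. Writing $m_\pm = \E[Y_{\pm}]$ and $v_\pm = \Var(Y_\pm)$, we have
\[
\P\!\left(\textstyle\sum_i Y_{i,+}\ge 0\right)
= \P\!\left(\frac{\sum_i (Y_{i,+}-m_+)}{\sqrt{n v_+}} \ge \frac{-\sqrt{n}\,m_+}{\sqrt{v_+}}\right)
= 1 - \P\!\left(\frac{\sum_i(Y_{i,+}-m_+)}{\sqrt{nv_+}} < \frac{-\sqrt n\, m_+}{\sqrt{v_+}}\right).
\]
Applying \cref{cor:berry_esseen} this is at most $1 - \Phi\!\big(-\sqrt n\, m_+/\sqrt{v_+}\big) + \Delta^{BE}_+ = \Phi\!\big(\sqrt n\, m_+/\sqrt{v_+}\big) + \Delta^{BE}_+$, where $\Delta^{BE}_+$ is the Berry--Esseen error. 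Now $m_+ = \E[Y_+] > 0$ (this is the point of the sign condition in \cref{cor:w}/\cref{lem:anticoncentration_gaussian}), so $\Phi$ is increasing in $m_+/\sqrt{v_+}$; plugging in $m_+ \le (2^{\varepsilon/2}-1)(1+c)P_t$ and the lower bound $v_+ \ge 2^{\varepsilon}\Gamma_+(1-\eta^*_+)P_t + \bigO(P_t^2)$ from \cref{lem:anticoncentration_gaussian}, and simplifying $\sqrt{P_t}/\sqrt{P_t} = 1$ in the ratio $m_+/\sqrt{v_+} \sim \sqrt{P_t}\cdot(\ldots)$, yields the first $\Phi(\cdot)$ term in \cref{eq:anti_conc}. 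The $-$ term is symmetric: $\sum_i Y_{i,-}\le 0$ standardizes to $\P\big(\text{(standardized)} \le -\sqrt n\, m_-/\sqrt{v_-}\big)$, which by \cref{cor:berry_esseen} is at most $\Phi\big(-\sqrt n\, m_-/\sqrt{v_-}\big) + \Delta^{BE}_-$, and since $m_- = \E[Y_-] < 0$ we have $-m_- = \E[-Y_-] \le (1-2^{-\varepsilon/2})(1+c)P_t$, again giving the stated $\Phi(\cdot)$ term. Summing the two contributions and subtracting $1$ produces exactly the right-hand side of \cref{eq:anti_conc}, with the two Berry--Esseen errors $\Delta^{BE}_\pm$ combining into the $C(1+\eta)2^{d/2}P_t^{-1/2}n^{-1/2}(\cdots)$ term plus the $O(\sqrt{P_t/n})$ remainder.

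The only genuinely delicate point is the bookkeeping of the $\bigO(P_t^2)$ correction terms in the variance and third-moment bounds of \cref{lem:anticoncentration_gaussian}: one must check that dividing $m_\pm = \Theta(P_t)$ by $\sqrt{v_\pm}$ with $v_\pm = 2^{\pm\varepsilon}\Gamma_\pm(1-\eta^*_\pm)P_t(1+\bigO(P_t))$ produces $\sqrt{n}\,m_\pm/\sqrt{v_\pm} = (\text{stated expression})\cdot(1+\bigO(P_t))$, and that since $\Phi$ is $1$-Lipschitz (indeed $\Phi' \le (2\pi)^{-1/2}$) this multiplicative error on the argument translates into an additive $\bigO(\sqrt{nP_t}\cdot P_t) = \bigO(\sqrt{P_t/n}\cdot nP_t)$ error — here one uses that the intended regime has $nP_t$ bounded or that the $O(\sqrt{P_t/n})$ term is meant to absorb lower-order contributions; likewise the $\bigO(P_t^2)$ inside the cube in the Berry--Esseen ratio only perturbs $\Delta^{BE}_\pm$ by a relative $\bigO(P_t)$, again folded into $O(\sqrt{P_t/n})$. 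Modulo tracking these asymptotic remainders, the proof is a direct substitution, so I would state it concisely: decompose via \cref{eq:gaussian:anti_decomp}, apply \cref{cor:berry_esseen} to each of the two probabilities, use monotonicity of $\Phi$ together with the sign of $\E[Y_\pm]$ and the bounds of \cref{lem:anticoncentration_gaussian}, and collect terms.
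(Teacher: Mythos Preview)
Your proposal is correct and follows essentially the same route as the paper: decompose via \cref{eq:gaussian:anti_decomp}, standardize each sum, apply the Berry--Esseen bound of \cref{cor:berry_esseen}, and then use monotonicity of $\Phi$ together with the sign information and moment bounds of \cref{lem:anticoncentration_gaussian} to obtain the stated $\Phi(\cdot)$ terms. The paper carries out exactly these steps in the same order (treating $Y_-$ first, then $Y_+$), and your concern about the $\bigO(P_t^2)$ bookkeeping in the variance is handled in the paper in precisely the way you anticipate---the $(1+O(P_t))$ perturbation of $\sigma_\pm$ is simply absorbed into the $O(\sqrt{P_t/n})$ remainder without further comment.
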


\begin{proof}
As in \cref{cor:berry_esseen}, define $Y_{-,i} = 2^{(d-\varepsilon)/2} K_t(x,X_i) - K_{2t}(x,X_i)$ with variance $\sigma_-^2 = \Var(Y_{-,i})$. 
From \cref{lem:anticoncentration_gaussian}, we have
\begin{equation*}
    \sigma_-^2 = \Var(Y_-) \geq 2^{-\varepsilon}\Gamma_-(1-\eta^*_-)P_t + O(P_t^2),
\end{equation*}
so $\sigma_- = 2^{-\varepsilon/2}\sqrt{\Gamma_-(1-\eta^*_-)P_t}(1 + O(P_t))$. 

Then beginning with the first term of decomposition in \cref{eq:gaussian:anti_decomp} and applying \cref{cor:berry_esseen}, we have 
\begin{align*}
    \P\left(2^{(d-\varepsilon)/2} S(x,t) - S(x,2t) \leq 0 \right) 
    &= \P\left( \frac{1}{\sqrt{n}} \sum_{i=1}^n \frac{Y_{-,i}-\E[Y_{-,i}]}{\sigma_-} \leq -\frac{\sqrt{n}\E[Y_{-,i}]}{\sigma_-}\right ) \\
    &\leq \Phi\left(\frac{\sqrt{n}\E[-Y_{-,i}]}{\sigma_-}\right) + \frac{C\rho_-}{\sigma_-^3\sqrt{n}} \\
    &\leq \Phi\left(\frac{\sqrt{n} \cdot (1-2^{-\varepsilon/2})(1+c)P_t}{2^{-\varepsilon/2}\sqrt{\Gamma_-(1-\eta^*_-)P_t}(1+O(P_t))}\right) \\
    &\phantom{=} + \frac{C(1+\eta)\Delta_-2^{d/2}}{(1-\eta^{*}_-)^{3/2}\Gamma_-^{3/2}}\frac{1}{\sqrt{nP_t}} + O\left(\sqrt{\frac{P_t}{n}}\right) \\
    &= \Phi\left(\frac{(1-2^{-\varepsilon/2})(1+c)\sqrt{P_t n}}{2^{-\varepsilon/2}\sqrt{\Gamma_-(1-\eta^*_-)}}\right) \\
    &\phantom{=} + \frac{C(1+\eta)\Delta_-2^{d/2}}{(1-\eta^{*}_-)^{3/2}\Gamma_-^{3/2}}\frac{1}{\sqrt{nP_t}} + O\left(\sqrt{\frac{P_t}{n}}\right).
\end{align*}

Similarly, let $Y_{+,i} = 2^{(d+\varepsilon)/2} K_t(x,X_i) - K_{2t}(x,X_i)$ with variance $\sigma_+^2 = \Var(Y_{+,i})$. From \cref{lem:anticoncentration_gaussian},
\begin{equation*}
    \sigma_+^2 = \Var(Y_+) \geq 2^{\varepsilon}\Gamma_+(1-\eta^*_+)P_t + O(P_t^2),
\end{equation*}
so $\sigma_+ = 2^{\varepsilon/2}\sqrt{\Gamma_+(1-\eta^*_+)P_t}(1 + O(P_t))$. Then for the second term of \cref{eq:gaussian:anti_decomp}, 
\begin{align*}
    \P\left(2^{(d+\varepsilon)/2} S(x,t) - S(x,2t) \geq 0 \right)
    &= \P\left( \frac{1}{\sqrt{n}} \sum_{i=1}^n \frac{Y_{+,i}-\E[Y_{+,i}]}{\sigma_+} \geq -\frac{\sqrt{n}\E[Y_{+,i}]}{\sigma_+}\right ) \\
    &\leq 1 - \Phi\left(-\frac{\sqrt{n}\E[Y_{+,i}]}{\sigma_+}\right) + \frac{C\rho_+}{\sigma_+^3\sqrt{n}} \\
    &= \Phi\left(\frac{\sqrt{n}\E[Y_{+,i}]}{\sigma_+}\right) + \frac{C\rho_+}{\sigma_+^3\sqrt{n}} \\
    &\leq \Phi\left(\frac{\sqrt{n} \cdot (2^{\varepsilon/2}-1)(1+c)P_t}{2^{\varepsilon/2}\sqrt{\Gamma_+(1-\eta^*_+)P_t}(1+O(P_t))}\right) \\
    &\phantom{=} + \frac{C(1+\eta)\Delta_+2^{d/2}}{(1-\eta^{*}_+)^{3/2}\Gamma_+^{3/2}}\frac{1}{\sqrt{nP_t}} + O\left(\sqrt{\frac{P_t}{n}}\right) \\
    &= \Phi\left(\frac{(2^{\varepsilon/2}-1)(1+c)\sqrt{P_t n}}{2^{\varepsilon/2}\sqrt{\Gamma_+(1-\eta^*_+)}}\right) \\
    &\phantom{=} + \frac{C(1+\eta)\Delta_+2^{d/2}}{(1-\eta^{*}_+)^{3/2}\Gamma_+^{3/2}}\frac{1}{\sqrt{nP_t}} + O\left(\sqrt{\frac{P_t}{n}}\right).
\end{align*}

Combining both bounds:
\begin{align*}
    \P(|\hat d(x,t) - d| \leq \varepsilon) &\leq \Phi\left(\frac{(2^{\varepsilon/2}-1)(1+c)\sqrt{P_t n}}{2^{\varepsilon/2}\sqrt{\Gamma_+(1-\eta^*_+)}}\right) \\
    &\phantom{=} + \Phi\left(\frac{(1-2^{-\varepsilon/2})(1+c)\sqrt{P_t n}}{2^{-\varepsilon/2}\sqrt{\Gamma_-(1-\eta^*_-)}}\right) \\
    &\phantom{=} + \frac{C(1+\eta)2^{d/2}}{\sqrt{P_tn}}\left(\frac{\Delta_+}{(1-\eta^{*}_+)^{3/2}\Gamma_+^{3/2}} + \frac{\Delta_-}{(1-\eta^{*}_-)^{3/2}\Gamma_-^{3/2}}\right) \\
    &\phantom{=} + O\left(\sqrt{\frac{P_t}{n}}\right) - 1.
\end{align*}
\end{proof}

\begin{remark}[Resolution regimes]\label{rem:regimes}
The bound in \cref{thm:anticonc:gaussian} reveals three regimes for the resolution $\varepsilon$, governed by the size of $P_t n$. Throughout, we need $\sqrt{P_t n} \gg 1$ for the error terms to be negligible (and in this regime, the $O(\sqrt{P_t/n})$ remainder is automatically small).
\begin{itemize}
    \item ($\varepsilon \ll 1/\sqrt{nP_t}$): The $\Phi$ terms vanish and the bound approaches zero, so accuracy $\varepsilon$ cannot be achieved.
    \item ($\varepsilon \sim 1/\sqrt{nP_t}$): This is the finest resolution attainable with $n$ samples at bandwidth $t$.
    \item ($\varepsilon \gg 1/\sqrt{nP_t}$): The $\Phi$ terms approach 1, the bound saturates, and the guarantee becomes vacuous.
\end{itemize}
Explicit constants in these scalings can be read off from the proof.
\end{remark}

The following remark makes the small-$\varepsilon$ scaling more explicit.

\begin{remark}[Small-$\varepsilon$ behavior]\label{rem:small_eps}
In the regime where $\varepsilon$ is small and $P_t n$ is large with $\varepsilon\sqrt{P_t n} \lesssim 1$, the bound in \cref{thm:anticonc:gaussian} simplifies. The $\Phi$ terms can be expanded as $\Phi(x) \approx \frac{1}{2} + \frac{x}{\sqrt{2\pi}}$ for small arguments, and the constants $\Gamma_\pm, \eta^*_\pm$ approach their $\varepsilon = 0$ limits:
\[
\Gamma_0 = 1 - 2\left(\frac{2}{3}\right)^{d/2} + 2^{-d/2}, \qquad
\eta_0^* = \eta\frac{1+2(2/3)^{d/2}+2^{-d/2}}{\Gamma_0}.
\]
The leading-order behavior becomes
\[
\P(|\hat d(x,t) - d| \leq \varepsilon) \approx \frac{\log 2}{\sqrt{2\pi\Gamma_0(1-\eta_0^*)}}(1+c)\sqrt{P_t n}\,\varepsilon.
\]
\end{remark}

\subsection{A bandwidth selection heuristic}\label{sec:bandwidth}

As noted in \cite{coifman2008}, to estimate the dimension we need to identify regions where the correct kernel statistics apply, where the kernel sum \cref{eq:kernel_sum} grows proportionally to $t^{d/2}$. \cref{fig:scaling_regimes} illustrates this scaling behavior numerically for the Gaussian kernel.

The bandwidth selection method in \cite{coifman2008} relied on visual inspection of log-log plots to find the most linear region, which \cite{BerryHarlim2016} automated via \emph{slope maximization} over a dyadic scale. We seek to improve upon this automatic selection criterion.

In many cases, slope maximization works well. However, even in the simple example of \cref{fig:scaling_regimes}, the maximal slope slightly overestimates the dimension. While the correct scaling regime lies near the maximal slope, the estimate is more stable within (rather than at the boundary of) this regime.

\begin{figure}[htbp]
    \centering
    \includegraphics[width=0.8\textwidth]{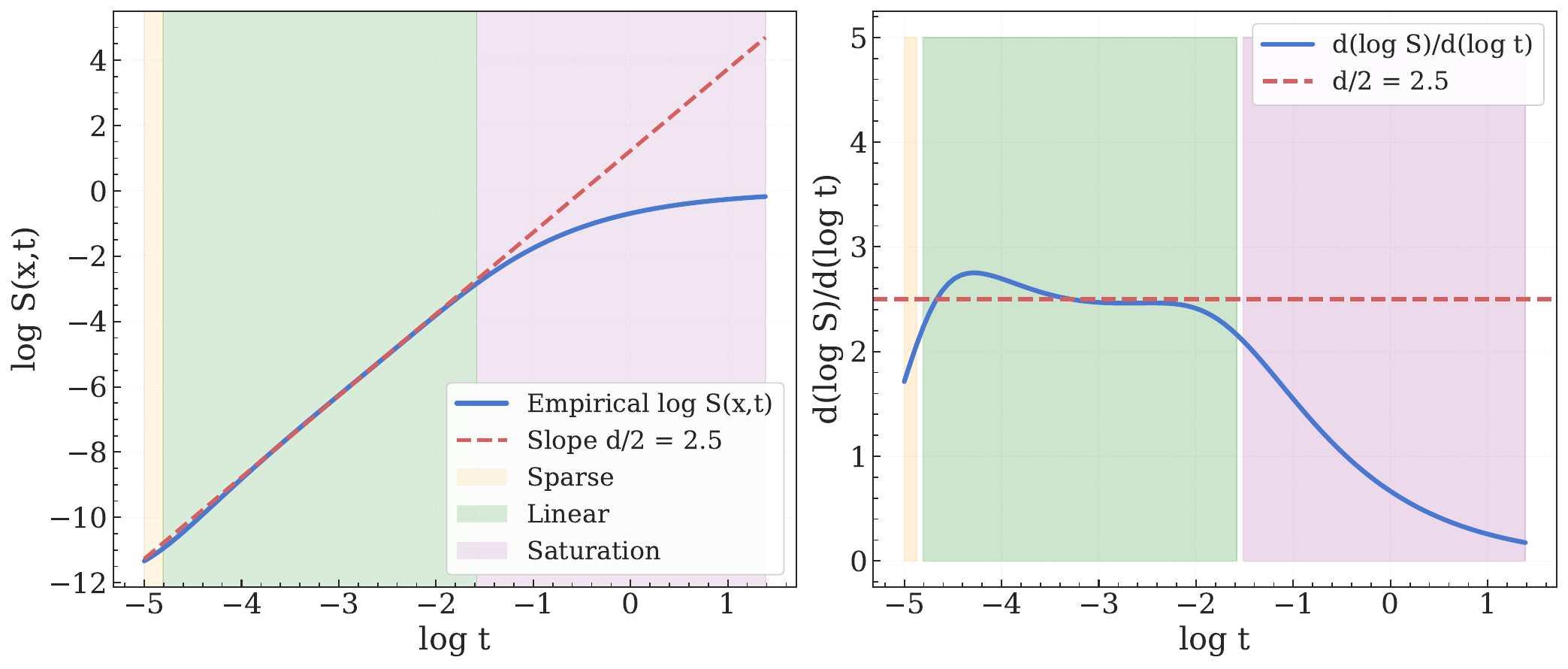}
    \caption{Scaling regimes of the kernel sum for unit ball in $\mathbb{R}^5$ with $n=100{,}000$ samples. Left: log-log plot showing linear regime with slope $d/2$. Right: derivative plateau indicating the linear regime.}
    \label{fig:scaling_regimes}
\end{figure}

One advantage of the Gaussian kernel over indicator kernels is smoothness: derivatives of the kernel sum are well-defined and can be computed efficiently (\cref{prop:derivatives}). This motivates exploring whether derivative information can improve bandwidth selection, an approach that is unavailable for discontinuous kernels such as the indicator kernel. We suggest utilizing more information in the derivatives of $\log S(x,t)$ to better capture the linear scaling region.

While a complete theoretical analysis of optimal bandwidth selection is beyond our scope, we propose a simple heuristic that can improve over slope maximization. If we define $G(t) \coloneq \log S(x,e^t)$, the heuristic is to maximize $\rho(t) = G'(t)/(|G''(t)| + \delta)$, which balances large slope with stability. Here $\delta$ is a small constant to avoid division by zero. The complete algorithm is summarized in \cref{alg:bandwidth_selection}, found in \cref{sec:bandwidth_proof}. In \cref{sec:numerical} we provide some numerical comparisons between this method and slope maximization.

% \begin{remark}
% Algorithm~\ref{alg:bandwidth_selection} is a heuristic; its purpose is to (i) motivate working with smooth kernels where derivatives are available, and (ii) demonstrate that derivative information can improve over slope maximization. 
% \end{remark}
\section{Numerical Experiments}\label{sec:numerical}
Having established our theoretical bounds, we now validate them numerically by comparing our bounds to empirical performance. We begin by applying our theoretical results \cref{thm:main,thm:anticonc:gaussian} to specific manifolds where we can calculate all necessary parameters.
 
After this we test our proposed bandwidth selection from \cref{sec:bandwidth}, by comparing our estimator against the maximization method of \cite{BerryHarlim2016}. We include the indicator kernel approach in our comparisons to provide a baseline.
\subsection{Concentration experiments}
We test \cref{thm:main} on two manifolds: a three-dimensional ball (flat geometry) and a three-dimensional spherical cap (curved), with parameters $L,M,r$ calculated similarly to \cref{ex:circle}.

In line with theory, in \cref{fig:concentration_comparison} our 
theoretical bounds successfully bound the empirical error, overestimating by approximately two orders of magnitude. Notably, the slopes indicate that our bounds are proportional to the correct scaling with respect to $n$.  
\begin{figure}[htbp]
    \centering
    \begin{subfigure}[b]{0.48\textwidth}
        \centering
        \includegraphics[width=\textwidth]{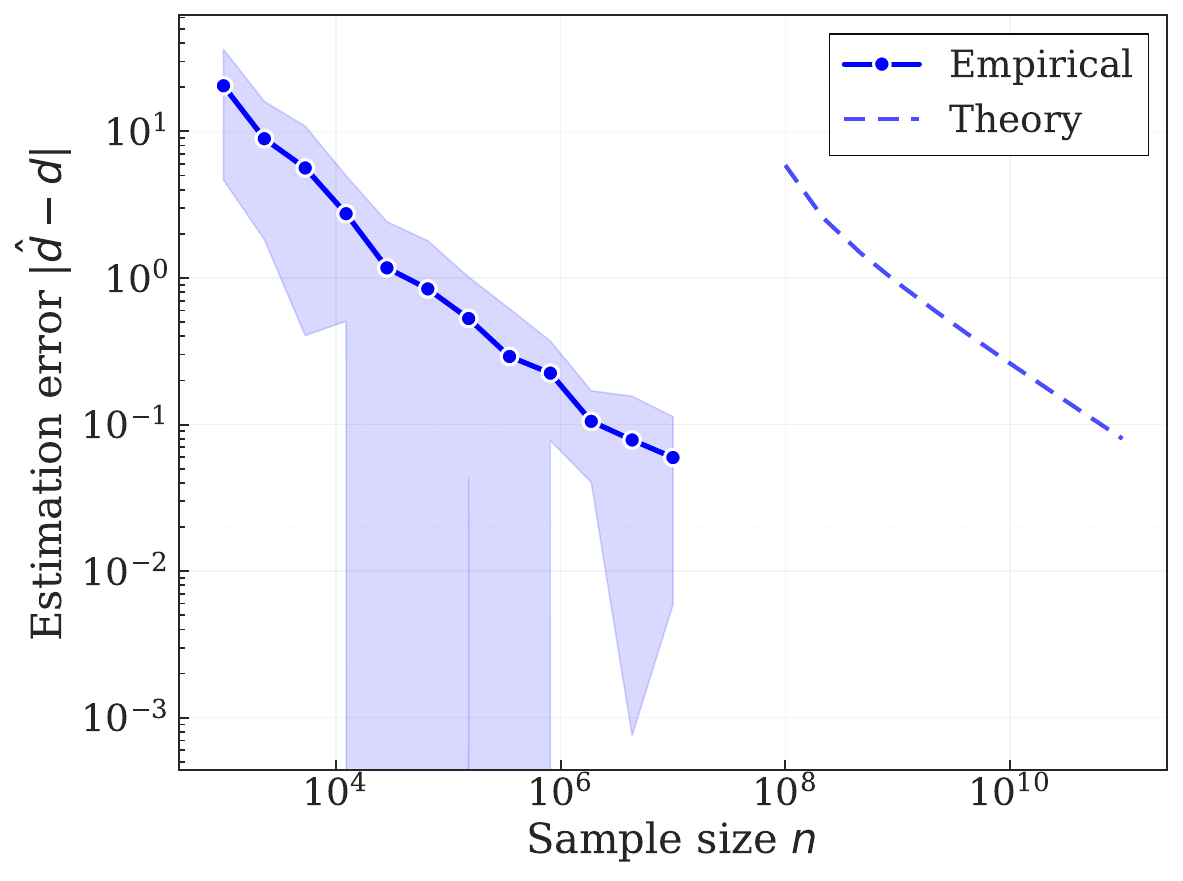}
        \caption{Flat manifold (ball)}
        \label{fig:concentration_ball}
    \end{subfigure}
    \hfill
    \begin{subfigure}[b]{0.48\textwidth}
        \centering
        \includegraphics[width=\textwidth]{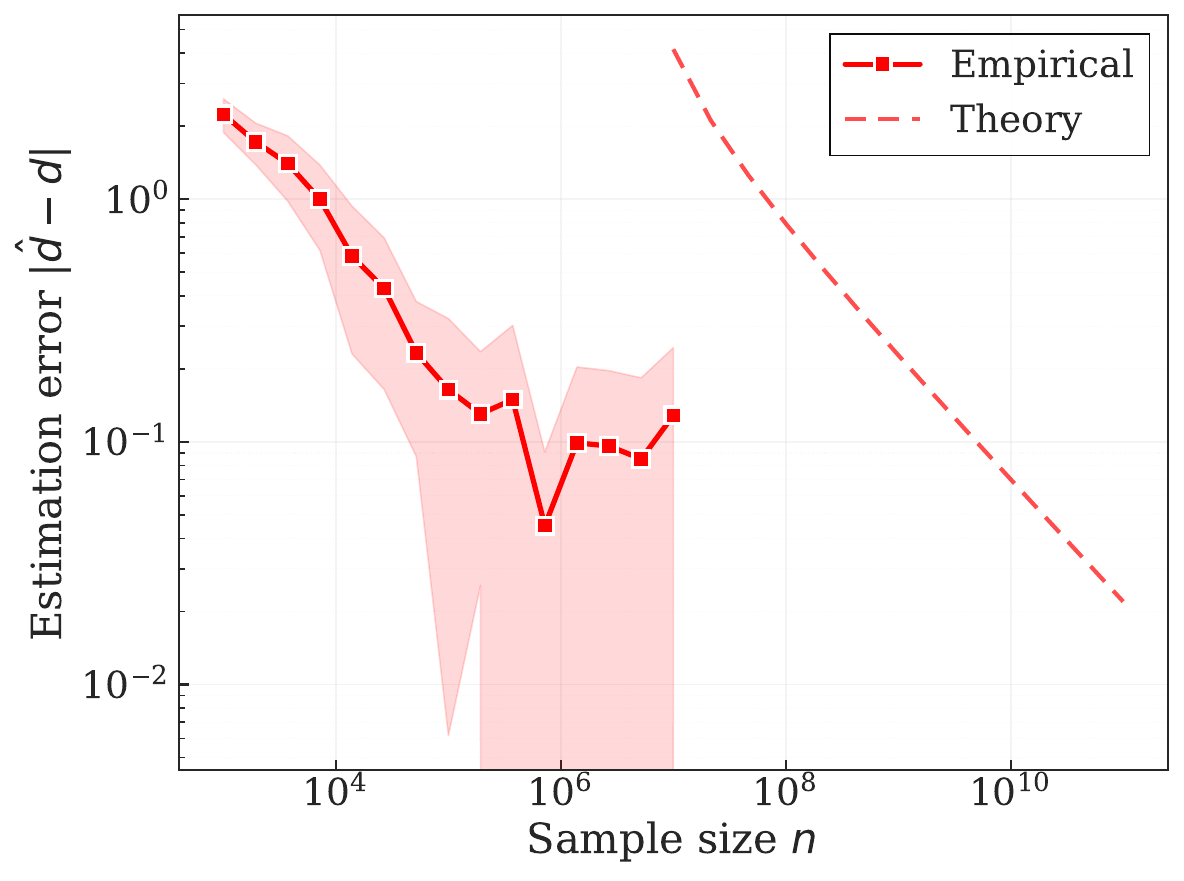}
        \caption{Curved manifold (spherical cap)}
        \label{fig:concentration_sphere}
    \end{subfigure}
    \caption{Concentration of dimension estimation error for intrinsic dimension $d=3$.
    Panel~(a) shows a uniform distribution on the unit ball in $\mathbb{R}^3$ (flat geometry) with bandwidth $t=0.000555 < t_0=0.000585$.
    Panel~(b) shows an approximately uniform distribution on a spherical cap of a 3-sphere with radius $R=10$ and bandwidth $t=0.000768 < t_0=0.000808$.
    Solid lines represent empirical mean errors, dashed lines show theoretical concentration bounds at 90\% confidence, and shaded regions indicate standard deviation. }
    \label{fig:concentration_comparison}
\end{figure}

In \cref{fig:anticoncentration} we have compared our theoretical anti-concentration bounds to the empirical ones for the three-dimensional spherical cap. In \cref{fig:anticonc_alpha_star} we show the linear scaling of \cref{rem:small_eps}, showing the accuracy limit for our estimator. For a given $n$, we 
cannot expect $\hat{d}$ to distinguish $d$ from nearby values within tolerance $\varepsilon^*$. In \cref{fig:anticonc_theory} we calculate our exact theoretical bounds for different values of $n$ and tolerances, which can be compared to empirical estimates in \cref{fig:anticonc_empirical}. As in the concentration setting, empirical performance exceeds the worst-case guarantees of \cref{thm:anticonc:gaussian}.

\begin{figure}[!htbp]
\centering
\begin{subfigure}{\textwidth}
    \centering
    \includegraphics[width=0.5\textwidth]{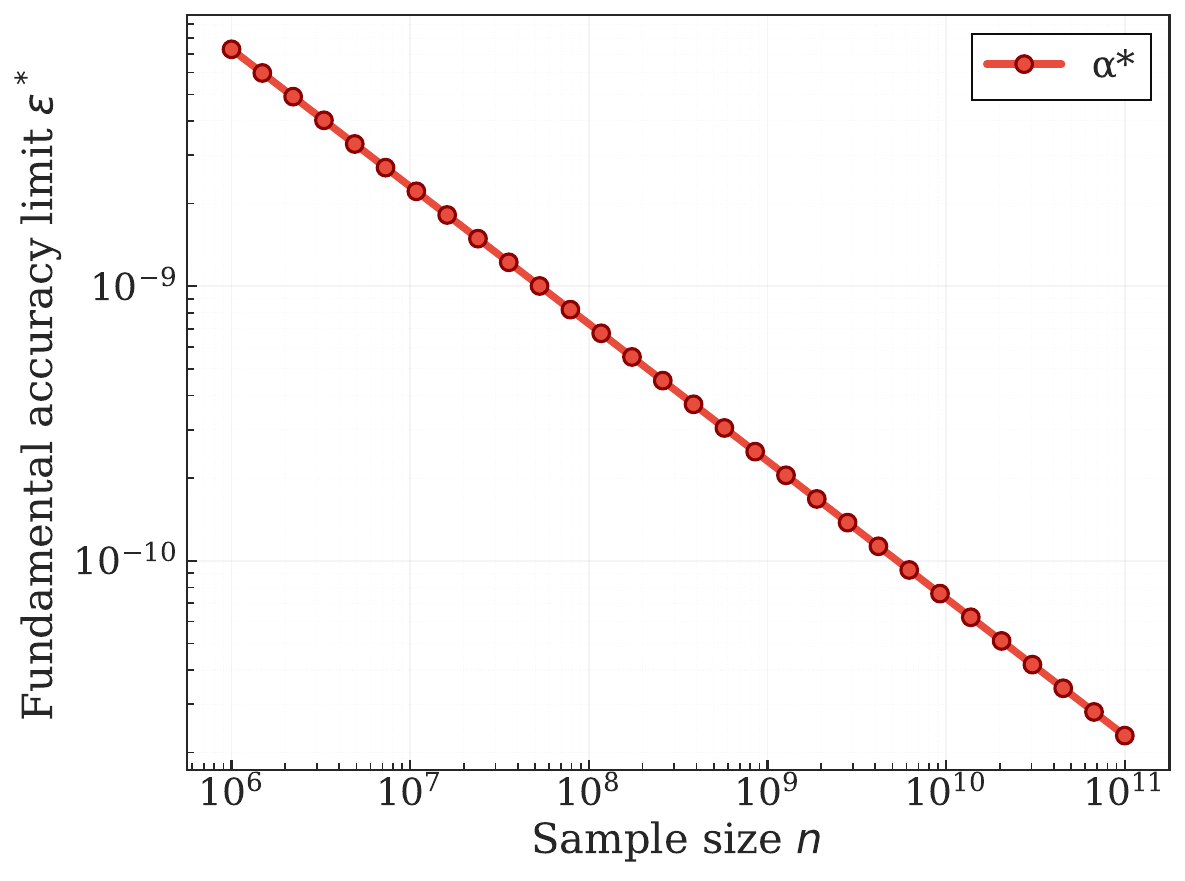}
    \caption{%
    Critical threshold $\varepsilon^*$ versus sample size $n$ from \cref{rem:small_eps}.
    Below $\varepsilon^*$, the upper bound on $\P(|\hat{d} - d| \leq \varepsilon)$ is at most $0.1$.
    }
    \label{fig:anticonc_alpha_star}
\end{subfigure}

\vspace{1em}

\begin{subfigure}[t]{0.48\textwidth}
    \centering
    \includegraphics[width=\textwidth]{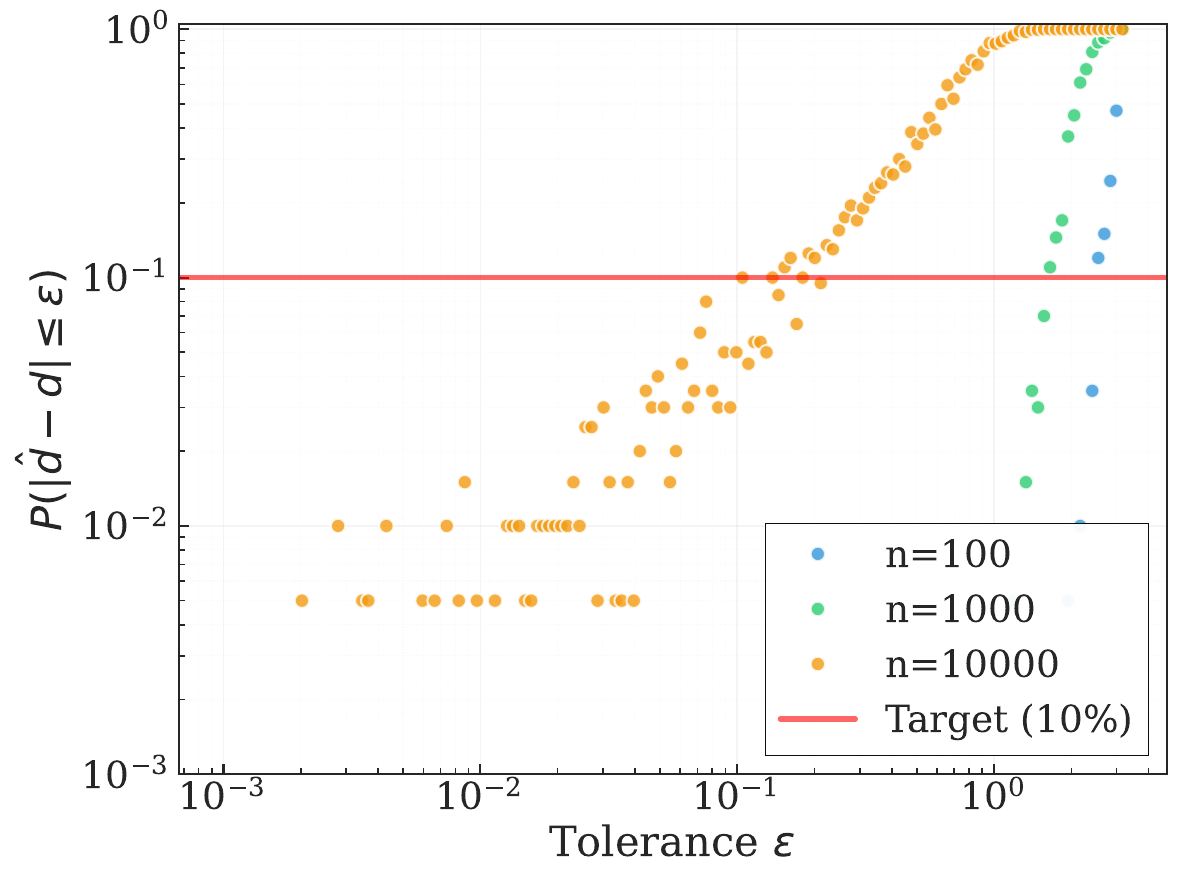}
    \caption{%
    Empirical probabilities $\P(|\hat{d} - d| \leq \varepsilon)$ over $200$ trials for $n \in \{100, 1000, 10000\}$.
    Red line: target threshold $0.1$.
    }
    \label{fig:anticonc_empirical}
\end{subfigure}
\hfill
\begin{subfigure}[t]{0.48\textwidth}
    \centering
    \includegraphics[width=\textwidth]{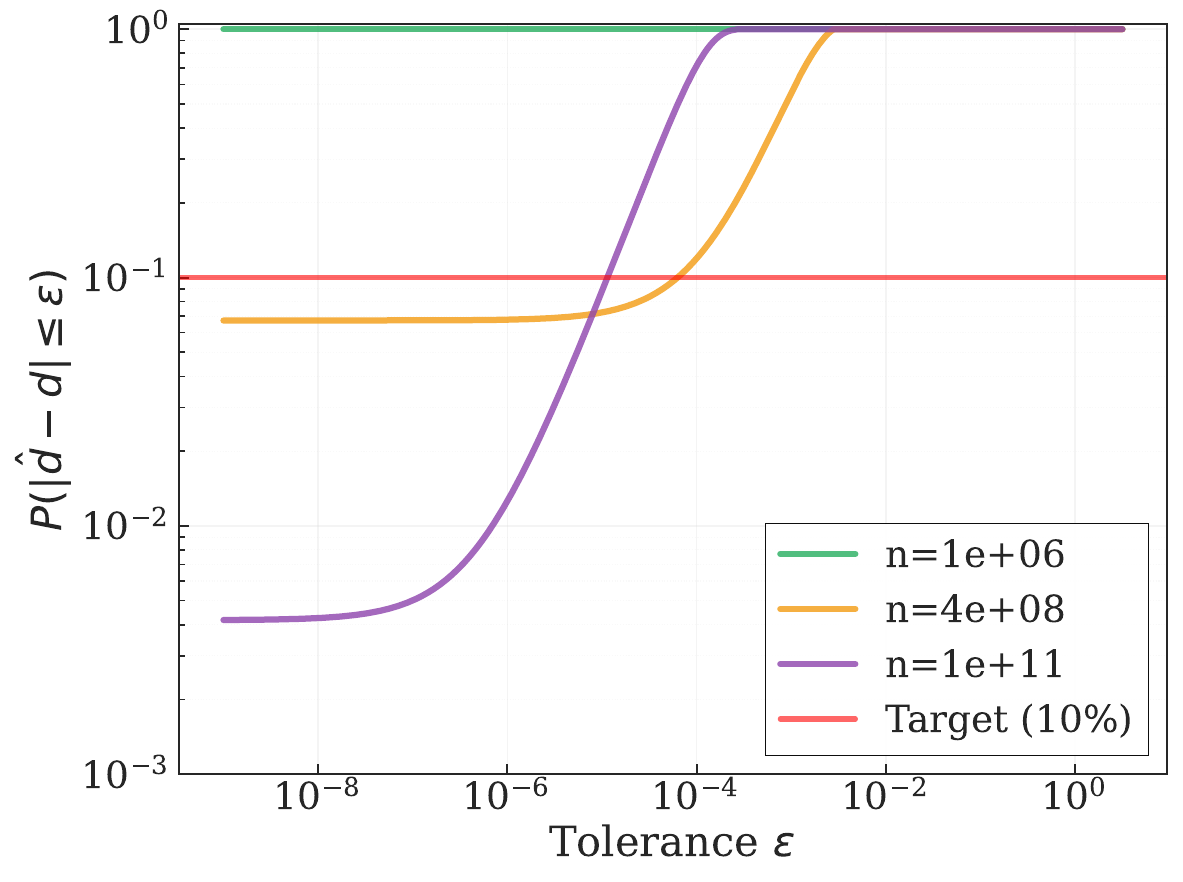}
    \caption{%
    Theoretical bounds from \cref{thm:anticonc:gaussian} for $n \in \{10^6, 10^8, 10^{11}\}$.
    }
    \label{fig:anticonc_theory}
\end{subfigure}

\caption{%
\textbf{Anti-concentration bounds on spherical cap.}
Experiments on a spherical cap of a 3-sphere with radius $R = 10$ (intrinsic dimension $d=3$, $L = 0.05$, $M = 0.005$) with bandwidth $t=0.000800 < t_0 = 0.000808$.
The anti-concentration bounds show fundamental accuracy limits.
}
\label{fig:anticoncentration}
\end{figure}

\subsection{Bandwidth selection experiments}
In this section we compare our bandwidth selection heuristic in \cref{alg:bandwidth_selection} to slope maximization, as well as the indicator kernel approach. Since noise is common in many applications, we add i.i.d. (ambient dimensional) Gaussian noise $\mathcal{N}(0, \sigma^2 I_N)$ to each of our samples before estimating 
dimension, for noise levels $\sigma \in \{0.0, 0.15, 0.30, 0.50\}$, and vary the number of samples. Each manifold is tested with a fixed reference point. The results are summarized in \cref{fig:method-comparison-manifolds} and \cref{fig:method-comparison-violin}.

We also extend our comparisons to the manifolds in the standard benchmark suggested in \cite{campadelli2015intrinsic}. However, we restrict our comparison to datasets with intrinsic dimension less than 10. This is because generally this class of estimators (more or less volume-based) do not perform well in higher dimensions, and comparisons are not interesting there. See \cref{fig:benchmark-difference} for a summary of this restricted benchmark.

\begin{figure}[H]
  \centering
  \includegraphics[width=\linewidth]{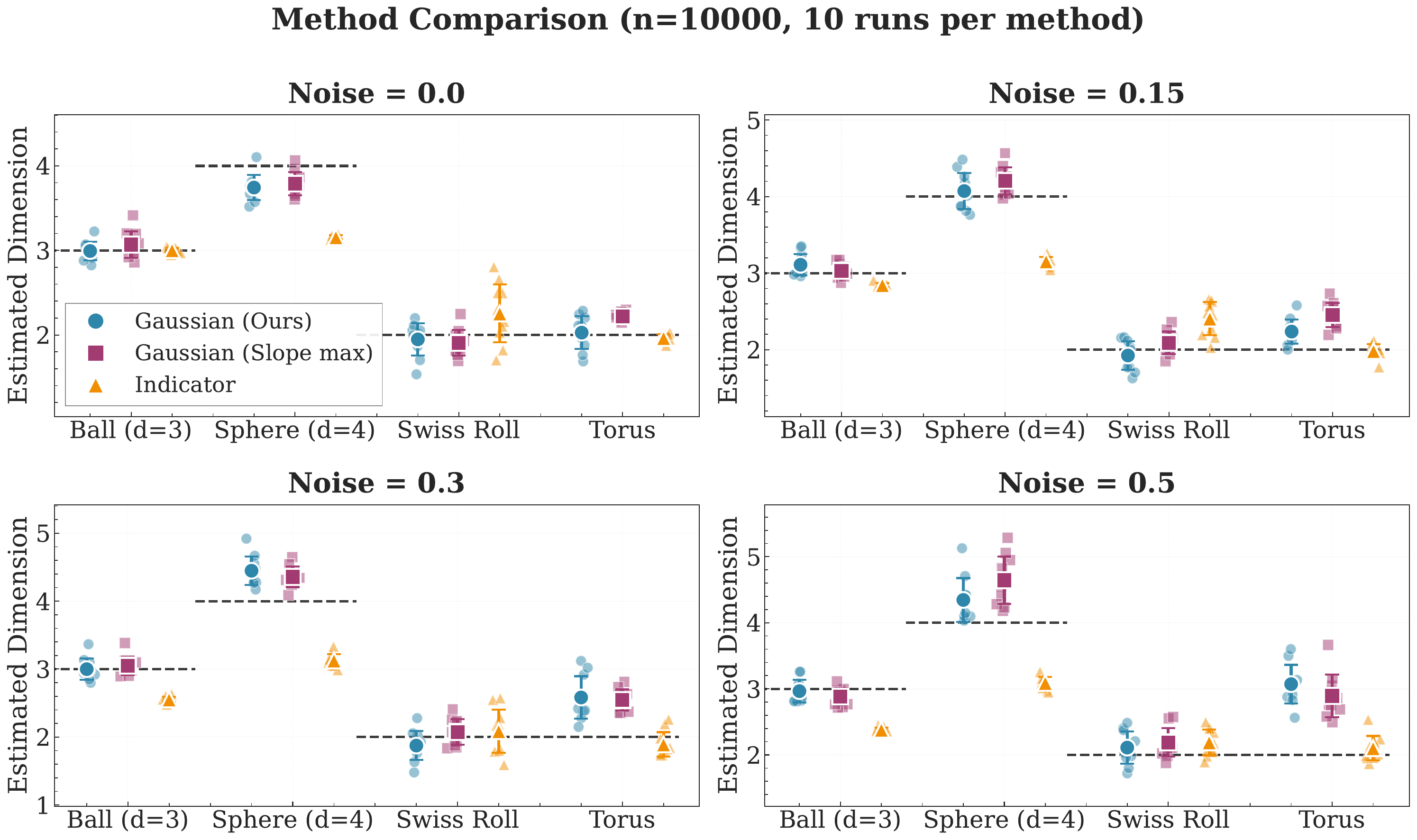}
  \caption{Distribution of estimated intrinsic dimension at noise levels $\sigma \in \{0.0, 0.15, 0.30, 0.50\}$ for the largest sample size ($n=10^4$). Each violin aggregates 10 runs on four manifolds (Ball $d=3$, Sphere $d=4$, Swiss Roll $d=2$, Torus $d=2$), showing Gaussian (Ours), Gaussian (Slope max), and indicator kernels side by side; the dashed line marks the true dimension.}
  \label{fig:method-comparison-violin}
\end{figure}

\begin{figure}[H]
  \centering
  \includegraphics[width=0.8\linewidth]{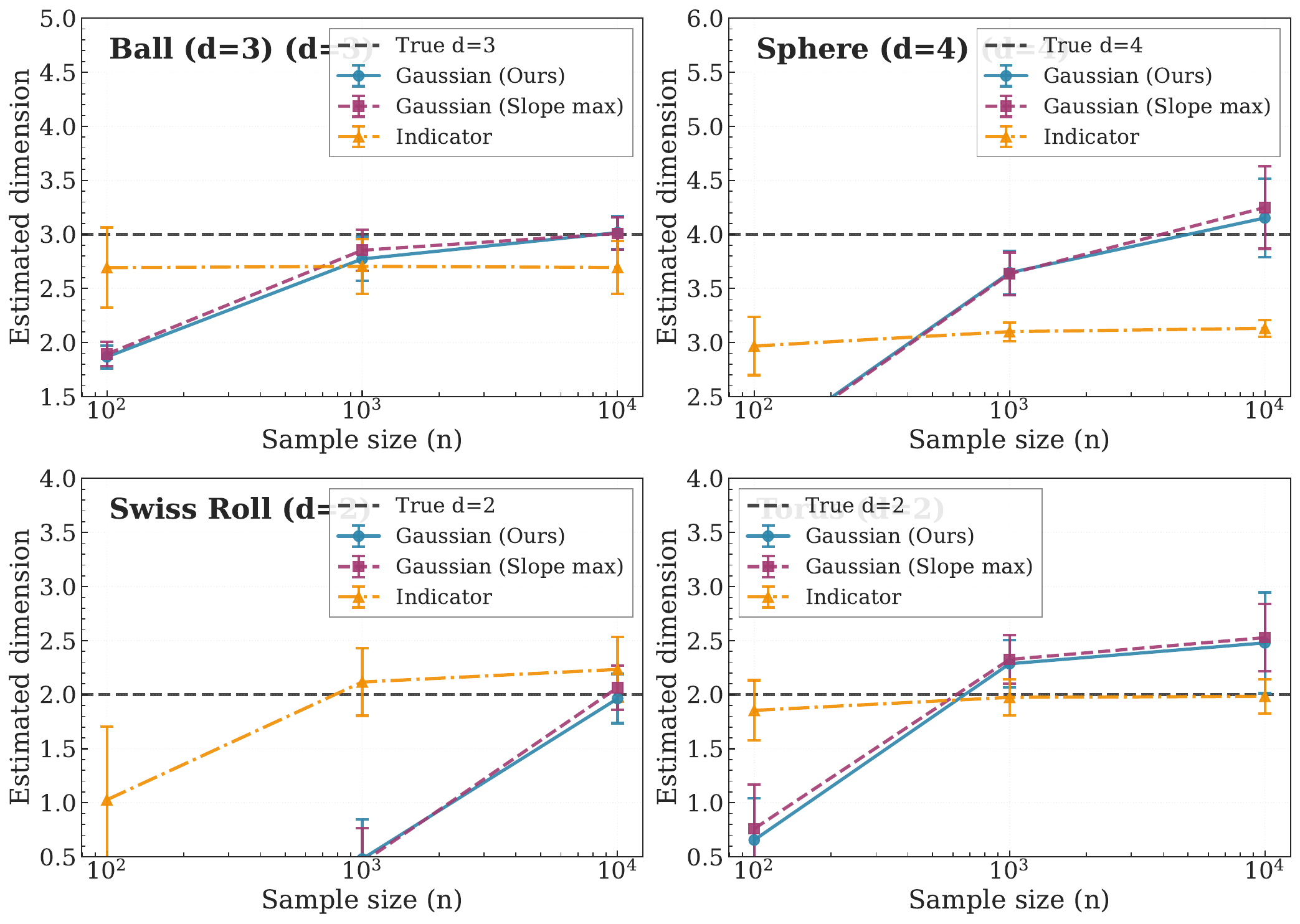}
  \caption{Gaussian (Ours) (blue), Gaussian (Slope max) (purple), and indicator kernel (orange) dimension estimates versus sample size ($n \in \{100, 1000, 10{,}000\}$) for Ball ($d=3$), Sphere ($d=4$), Swiss Roll ($d=2$), and Torus ($d=2$). Curves average 10 runs across all noise levels ($\sigma \in \{0.0, 0.15, 0.30, 0.50\}$), with error bars showing one standard deviation, and dashed lines marking the true intrinsic dimensions.}
  \label{fig:method-comparison-manifolds}
\end{figure}

\begin{figure}[H]
  \centering
  \includegraphics[width=0.9\linewidth]{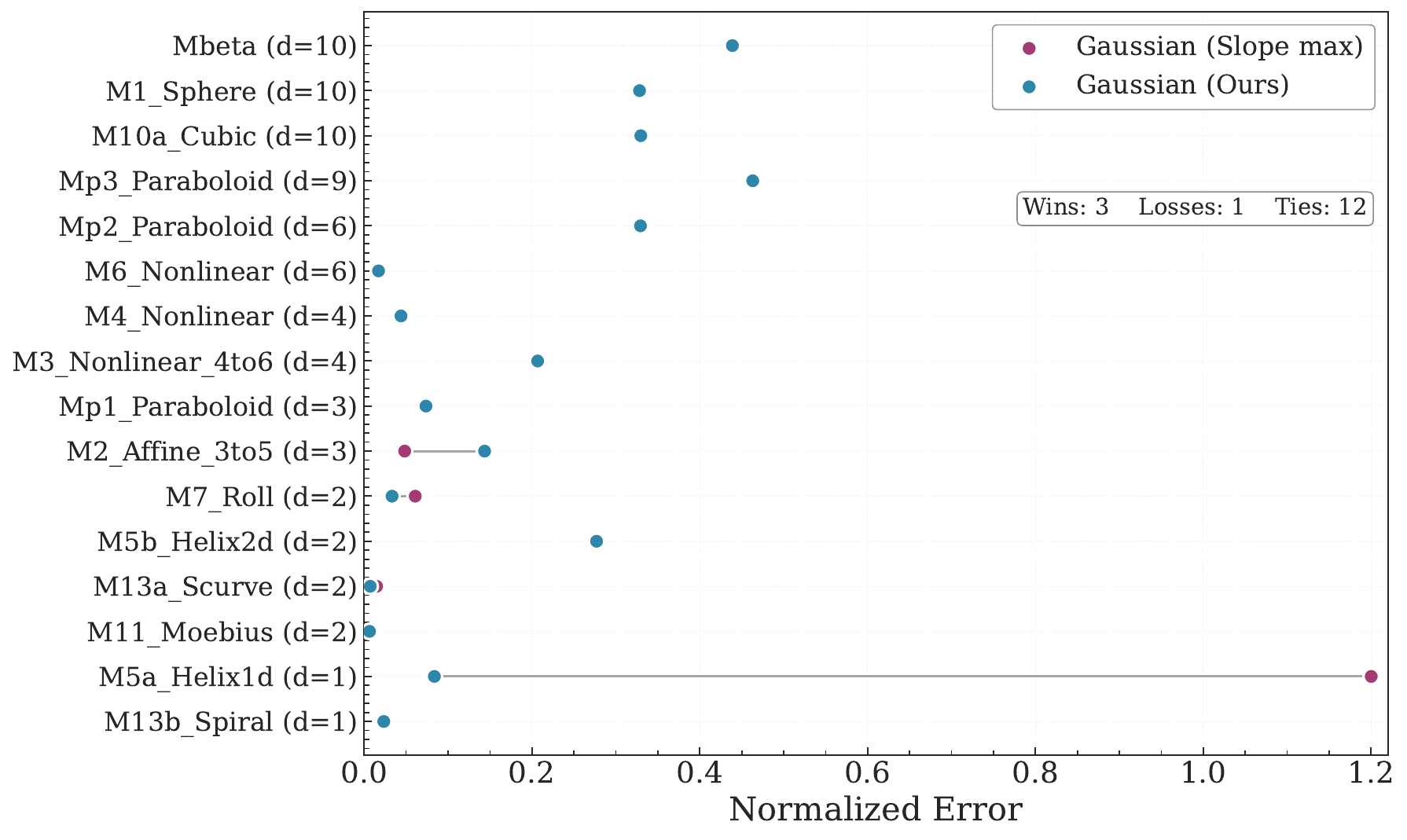}
    \caption{Normalized error $|\hat{d} - d|/d$ for Gaussian (Slope max) (purple) and Gaussian (Ours) (blue) across the scikit-dimension manifolds with intrinsic dimension $d \leq 10$ ($n=10{,}000$ samples, 10 runs per method, no ambient noise). Gray connectors link paired estimates; the inset reports how often our method wins, loses, or ties.}
  \label{fig:benchmark-difference}
\end{figure}

Performance varies across manifolds and noise levels, and no single method dominates uniformly. However, our proposed heuristic is the most robust overall, particularly under noise, while the indicator kernel is consistently less stable.

\section{Conclusion and future work}\label{sec:conclusion}
We have established concentration and anti-concentration properties of Gaussian kernel dimension estimators, with explicit dependence on sample size, bandwidth, and the geometric and distributional regularity of the underlying manifold. Our result in \cref{thm:main} provides finite-sample deviation bounds that can be used for rigorous confidence intervals and hypothesis tests. As a complement to this concentration result, the anti-concentration in \cref{thm:anticonc:gaussian} identifies fundamental limits on the precision achievable by our estimator. On the practical side, the bandwidth heuristic in \cref{alg:bandwidth_selection} leverages derivative information to improve empirical performance.

We mention a few directions for future work. Our concentration bounds overestimate compared with empirical errors, partly reflecting worst-case analysis with respect to $(L,M,r,p,\kappa)$ and intentional slack in the proofs; tightening these bounds is a natural next step. Relatedly, developing data-driven approaches to estimate these parameters would be valuable. Finally, the strong empirical performance of the bandwidth selection heuristic (\cref{alg:bandwidth_selection}, \cref{fig:method-comparison-violin,fig:method-comparison-manifolds,fig:benchmark-difference}) motivates further work on bandwidth selection using derivative information.
\section*{Acknowledgments}
The author was partly supported by the Wallenberg AI, Autonomous
Systems and Software Program (WASP) funded by the Knut and Alice Wallenberg Foundation.
\clearpage
\bibliography{bibliography}

@article{aamari2019estimating,
  title={Estimating the reach of a manifold},
  author={Aamari, Eddie and Kim, Jisu},
  journal={Electronic journal of statistics},
  volume={13},
  number={1},
  year={2019}
}

@article{amsaleg2018extreme,
  title={Extreme-value-theoretic estimation of local intrinsic dimensionality},
  author={Amsaleg, Laurent and Chelly, Oussama and Furon, Teddy and Girard, St{\'e}phane and Houle, Michael E and Kawarabayashi, Ken-ichi and Nett, Michael},
  journal={Data Mining and Knowledge Discovery},
  volume={32},
  number={6},
  pages={1768--1805},
  year={2018},
  publisher={Springer}
}

@article{ansuini2019intrinsic,
  title={Intrinsic dimension of data representations in deep neural networks},
  author={Ansuini, Alessio and Laio, Alessandro and Macke, Jakob H and Zoccolan, Davide},
  journal={Advances in Neural Information Processing Systems},
  volume={32},
  year={2019}
}

@article{belkin2003laplacian,
  title={Laplacian eigenmaps for dimensionality reduction and data representation},
  author={Belkin, Mikhail and Niyogi, Partha},
  journal={Neural computation},
  volume={15},
  number={6},
  pages={1373--1396},
  year={2003},
  publisher={MIT Press}
}

@article{BerryHarlim2016,
  title={Variable bandwidth diffusion kernels},
  author={Berry, Tyrus and Harlim, John},
  journal={Applied and Computational Harmonic Analysis},
  volume={40},
  number={1},
  pages={68--96},
  year={2016},
  publisher={Elsevier}
}

@article{BerryHarlim2018,
  title={Iterated diffusion maps for feature identification},
  author={Berry, Tyrus and Harlim, John},
  journal={Applied and Computational Harmonic Analysis},
  volume={45},
  number={1},
  pages={84--119},
  year={2018},
  publisher={Elsevier}
}

@article{binnie2025survey,
  title={A Survey of Dimension Estimation Methods},
  author={Binnie, James AD and Harvey, John and Malinowski, Jakub and Yim, Ka Man and others},
  journal={arXiv preprint arXiv:2507.13887},
  year={2025}
}

@article{birdal2021intrinsic,
  title={Intrinsic dimension, persistent homology and generalization in neural networks},
  author={Birdal, Tolga and Lou, Aaron and Guibas, Leonidas J and Simsekli, Umut},
  journal={Advances in neural information processing systems},
  volume={34},
  pages={6776--6789},
  year={2021}
}

@article{campadelli2015intrinsic,
  title={Intrinsic dimension estimation: Relevant techniques and a benchmark framework},
  author={Campadelli, Paola and Casiraghi, Elena and Ceruti, Claudio and Rozza, Alessandro},
  journal={Mathematical Problems in Engineering},
  volume={2015},
  year={2015},
  publisher={Hindawi}
}

@article{ceruti2012danco,
  title={DANCo: dimensionality from angle and norm concentration},
  author={Ceruti, Claudio and Bassis, Simone and Rozza, Alessandro and Lombardi, Gabriele and Casiraghi, Elena and Campadelli, Paola},
  journal={arXiv preprint arXiv:1206.3881},
  year={2012}
}

@article{coifman2006diffusion,
  title={Diffusion maps},
  author={Coifman, Ronald R and Lafon, St{\'e}phane},
  journal={Applied and computational harmonic analysis},
  volume={21},
  number={1},
  pages={5--30},
  year={2006},
  publisher={Elsevier}
}

@article{coifman2008,
  title={Graph Laplacian tomography from unknown random projections},
  author={Coifman, Ronald R and Shkolnisky, Yoel and Sigworth, Fred J and Singer, Amit},
  journal={IEEE Transactions on Image Processing},
  volume={17},
  number={10},
  pages={1891--1899},
  year={2008},
  publisher={IEEE}
}

@inproceedings{dupuis2023generalization,
  title={Generalization bounds using data-dependent fractal dimensions},
  author={Dupuis, Benjamin and Deligiannidis, George and Simsekli, Umut},
  booktitle={International Conference on Machine Learning},
  pages={8922--8968},
  year={2023},
  organization={PMLR}
}

@article{facco2017estimating,
  title={Estimating the intrinsic dimension of datasets by a minimal neighborhood information},
  author={Facco, Elena and d'Errico, Maria and Rodriguez, Alex and Laio, Alessandro},
  journal={Scientific reports},
  volume={7},
  number={1},
  pages={12140},
  year={2017},
  publisher={Nature Publishing Group UK London}
}

@article{fan2010intrinsic,
  title={Intrinsic dimension estimation of data by principal component analysis},
  author={Fan, Mingyu and Gu, Nannan and Qiao, Hong and Zhang, Bo},
  journal={arXiv preprint arXiv:1002.2050},
  year={2010}
}

@inproceedings{farahmand2007manifold,
  title={Manifold-adaptive dimension estimation},
  author={Farahmand, Amir Massoud and Szepesv{\'a}ri, Csaba and Audibert, Jean-Yves},
  booktitle={Proceedings of the 24th international conference on Machine learning},
  pages={265--272},
  year={2007}
}

@article{fukunaga1971algorithm,
  title={An algorithm for finding intrinsic dimensionality of data},
  author={Fukunaga, Keinosuke and Olsen, David R},
  journal={IEEE Transactions on computers},
  volume={100},
  number={2},
  pages={176--183},
  year={1971},
  publisher={IEEE}
}

@article{grassberger1983measuring,
  title={Measuring the strangeness of strange attractors},
  author={Grassberger, Peter and Procaccia, Itamar},
  journal={Physica D: nonlinear phenomena},
  volume={9},
  number={1-2},
  pages={189--208},
  year={1983},
  publisher={Elsevier}
}

@inproceedings{hein2005intrinsic,
  title={Intrinsic dimensionality estimation of submanifolds in Rd},
  author={Hein, Matthias and Audibert, Jean-Yves},
  booktitle={Proceedings of the 22nd international conference on Machine learning},
  pages={289--296},
  year={2005}
}

@article{levina2004maximum,
  title={Maximum likelihood estimation of intrinsic dimension},
  author={Levina, Elizaveta and Bickel, Peter},
  journal={Advances in neural information processing systems},
  volume={17},
  year={2004}
}

@article{pettis1979intrinsic,
  title={An intrinsic dimensionality estimator from near-neighbor information},
  author={Pettis, Karl W and Bailey, Thomas A and Jain, Anil K and Dubes, Richard C},
  journal={IEEE Transactions on pattern analysis and machine intelligence},
  number={1},
  pages={25--37},
  year={1979},
  publisher={IEEE}
}

@article{scholkopf1998nonlinear,
  title={Nonlinear component analysis as a kernel eigenvalue problem},
  author={Sch{\"o}lkopf, Bernhard and Smola, Alexander and M{\"u}ller, Klaus-Robert},
  journal={Neural computation},
  volume={10},
  number={5},
  pages={1299--1319},
  year={1998},
  publisher={MIT Press}
}

@article{tenenbaum2000global,
  title={A global geometric framework for nonlinear dimensionality reduction},
  author={Tenenbaum, Joshua B and Silva, Vin de and Langford, John C},
  journal={science},
  volume={290},
  number={5500},
  pages={2319--2323},
  year={2000},
  publisher={American Association for the Advancement of Science}
}

@article{von2007tutorial,
  title={A tutorial on spectral clustering},
  author={Von Luxburg, Ulrike},
  journal={Statistics and computing},
  volume={17},
  number={4},
  pages={395--416},
  year={2007},
  publisher={Springer}
}

@article{wang2004adaptive,
  title={Adaptive manifold learning},
  author={Wang, Jing and Zhang, Zhenyue and Zha, Hongyuan},
  journal={Advances in neural information processing systems},
  volume={17},
  year={2004}
}
\bibliographystyle{ieeetr}

\appendix
\section{Classical inequalities}\label{sec:app_classical}
For completeness we state the two classical results used in our proofs.
\begin{theorem}[Bernstein's inequality]\label{thm:bernstein}
    Let $X_1,\dots,X_n$ be independent random variables with finite variance such that $|X_i- \E[X_i]| \leq b$ for some $b>0$ almost surely for all $i\leq n$. Let
    \begin{equation*}
        S = \sum_{i=1}^n(X_i-\E[X_i]),
    \end{equation*}
    and $v=\sum_{i=1}^n \Var[X_i]$. Then
    \begin{equation*}
        \P\left(S\geq \varepsilon\right) \leq \exp\left(\frac{-\varepsilon^2}{2(v+b\varepsilon/3)}\right).
    \end{equation*}
\end{theorem}

\begin{theorem}[Berry--Esseen, i.i.d. with explicit constant]\label{thm:BE-iid}
Let \(X_1,X_2,\dots\) be i.i.d. with mean \(\mu\), variance \(\sigma^2>0\), and
\[
\rho:=\mathbb{E}|X_1-\mu|^{3}<\infty.
\]
Define \(S_n=\frac{1}{n}\sum_{i=1}^n X_i\) and
\[
F_n(x):=\mathbb{P}\Bigl(\frac{\sqrt{n}S_n-\sqrt{n}\mu}{\sigma}\le x\Bigr),\qquad
\Phi(x):=\frac{1}{\sqrt{2\pi}}\int_{-\infty}^{x}e^{-t^{2}/2}\,dt .
\]
Then
\[
\sup_{x\in\mathbb{R}}|F_n(x)-\Phi(x)|
\le
C\,\frac{\rho}{\sigma^{3}\sqrt{n}},
\qquad C\le 0.4748 .
\]
\end{theorem}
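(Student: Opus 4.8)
The plan is the classical characteristic-function argument based on Esseen's smoothing inequality. After standardizing (replace each $X_i$ by $(X_i-\mu)/\sigma$, so that without loss of generality $\mu=0$, $\sigma=1$, and $\rho=\E|X_1|^3\ge 1$ by the power-mean inequality), write $\phi(s)=\E[e^{isX_1}]$ for the characteristic function of one standardized summand. Since $\sqrt{n}\,S_n=\frac{1}{\sqrt{n}}\sum_{i=1}^n X_i$, the characteristic function of $\sqrt{n}\,S_n$ is $\phi(s/\sqrt{n})^n$, and $\Phi$ has characteristic function $e^{-s^2/2}$, so the theorem becomes a statement comparing these two.

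First I would invoke Esseen's smoothing lemma: for every $T>0$,
\[
\sup_{x\in\R}|F_n(x)-\Phi(x)|\le \frac{1}{\pi}\int_{-T}^{T}\left|\frac{\phi(s/\sqrt{n})^n-e^{-s^2/2}}{s}\right|\,ds+\frac{c_0}{T},
\]
where $c_0$ absorbs the absolute constant of the smoothing lemma together with $\sup_x\Phi'(x)=1/\sqrt{2\pi}$. This reduces everything to a pointwise estimate of the integrand on $|s|\le T$.

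The core estimate uses the third moment twice. Since $\rho<\infty$, Taylor's theorem gives $|\phi(u)-(1-u^2/2)|\le \rho|u|^3/6$, hence $|\phi(u)|\le \exp(-u^2/2+\rho|u|^3/6)\le e^{-u^2/3}$ once $|u|\le c/\rho$ for a suitable constant $c$. Choosing $T$ of order $\sqrt{n}/\rho$ — which is exactly where the factor $\rho/\sqrt{n}$ in the conclusion comes from — guarantees $|s/\sqrt{n}|\le c/\rho$ on the whole range of integration. Writing $u=s/\sqrt{n}$ and combining the elementary inequality $|a^n-b^n|\le n|a-b|\max(|a|,|b|)^{n-1}$ (with $a=\phi(u)$, $b=e^{-u^2/2}$), the bound $\max(|\phi(u)|,e^{-u^2/2})^{n-1}\le e^{u^2/3}e^{-nu^2/3}$, and $|\phi(u)-e^{-u^2/2}|\le C_1\rho|u|^3$ (using $\rho\ge1$ and $|u|\le1$ to absorb the quartic remainder of $e^{-u^2/2}$), one gets $|\phi(s/\sqrt{n})^n-e^{-s^2/2}|\le C_1'\,\frac{\rho|s|^3}{\sqrt{n}}\,e^{-s^2/3}$ for $|s|\le T$. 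Dividing by $|s|$ and integrating over $\R$ contributes at most $C_1''\rho/\sqrt{n}$ since $\int_\R s^2 e^{-s^2/3}\,ds<\infty$, and the smoothing remainder $c_0/T$ is $\le C_2\rho/\sqrt{n}$ by the choice of $T$; adding the two pieces gives $\sup_x|F_n(x)-\Phi(x)|\le C\rho/\sqrt{n}$ for an explicit $C$.

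The main obstacle is not this outline, which yields only a crude constant (of order $3$ to $6$), but extracting the sharp value $C\le 0.4748$. That requires using the sharpest available form of the smoothing lemma, splitting $|s|\le T$ into a near-origin region treated by a second-order expansion with exact integral remainder and a far region where $|\phi(s/\sqrt{n})|$ is controlled by an explicit sub-Gaussian profile, optimizing the cutoff $T$ simultaneously with every intermediate constant, and disposing of small $n$ trivially (when $\rho/\sqrt{n}$ is large the left-hand side is at most $1$). This delicate optimization is precisely the content of the successive refinements of Esseen's theorem due to van Beek, Shiganov, and Shevtsova; for the stated value $0.4748$ I would cite Shevtsova's result rather than reconstruct the bookkeeping.
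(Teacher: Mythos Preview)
The paper does not prove this theorem. Berry--Esseen with the stated constant is quoted as a classical tool, exactly as Bernstein's inequality is quoted earlier in the same section: both are stated without proof and then applied. So there is no ``paper's own proof'' to compare against.

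Your outline is the standard characteristic-function argument and is correct as a sketch; you also correctly identify that the value $0.4748$ is not reachable by the crude version of the argument and requires the refinements of Shevtsova, which you propose to cite. That is precisely the right disposition here: the paper treats this as an external result, and in context a citation (to Shevtsova, 2011) is what is called for rather than a proof.
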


\section{Gaussian kernel proofs}\label{sec:app_main}
\subsection{Proof of \cref{lem:general}}
Since the manifold is locally $(L,M,r)$-regular at $x$, and $r_0 \leq r$, in the ball $B_r(x)$ we can use the projection $\pi$ from \cref{def:LMr-regular}. Let $y \in B_{r_0}(x) \cap \Omega$, $z \coloneqq \pi(y)-x \in T_x\Omega$ and $h_z \coloneqq  \pi(y)-y$. By $(L,M,r)$-regularity, we have that 
\begin{align}\label{eq:proj}
    \|x-y\|^2 = \|z\|^2 + \|h_z\|^2, \quad 0 \leq \|h_z\|^2 \leq L^2 \|z\|^4.
\end{align}
For the volume element of $\pi^{-1}$, $V(y)$, we have that
\begin{equation}\label{eq:dist}
    (1-M\|z\|^2) \leq V(y) \leq (1+M\|z\|^2),
\end{equation}
and since $p$ is $\kappa$-Lipschitz, we know that
\begin{equation}\label{eq:lip}
    |p(x)-p(y)| \leq \kappa\|x-y\| \leq \kappa(\|z\| + \|h_z\|) \leq 2\kappa r_0.
\end{equation}

To compute $\E[K_t(x,X)]$, we split the integration over $U\coloneq B_{r_0}(x) \cap \Omega$ and $U^c \coloneq \Omega \setminus B_{r_0}(x)$. For $y \in U^c$, $\|x-y\| \geq r_0$, since $p$ is a density, we have that
\begin{equation*}
    \int_{U^c} K_{t}(x,y)p(y)dy \leq e^{-r_0^2/t} = \tau(t)
\end{equation*}
Then using \cref{eq:proj,eq:dist,eq:lip} we get
\begin{equation}\label{eq:mom:lb}
    \begin{split}
     \int_\Omega K_{t}(x,y)p(y)\dy &\geq \int_{U} K_{t}(x,y)p(y)\dy \\
     &\geq \int_{U} e^{-(\|z\|^2+L^2\|z\|^4)/t}p(y)\dy \\
     &\geq e^{-L^2r_0^4/t}\int_{U} e^{-\|z\|^2/t}p(\pi^{-1}(z))V(y(z))\dz\\
     &\geq e^{-L^2r_0^4/t}(p(x)-2\kappa r_0)(1-Mr_0^2)\int_{\|z\| \leq r_0} e^{-\|z\|^2/t}\dz \\
     &= \left(1-\frac{2\kappa r_0}{p(x)}\right)(1-Mr_0^2)(1-\alpha(t))e^{-L^2r_0^4/t}p(x)\pi^{d/2}t^{d/2}
    \end{split}
\end{equation}
Similarly, for an upper bound we get
\begin{equation}\label{eq:mom:ub}
    \begin{split}
     \int_\Omega K_{t}(x,y)p(y)dy &\leq \int_U K_{t}(x,y)p(y)dy + \tau(t)\\
    &\leq (p(x)+2\kappa r_0) \int_{\|z\| \leq r_0} e^{-(\|z\|^2)/t}(1+M\|z\|^2)dz +  \tau(t) \\
    &\leq (p(x)+2\kappa r_0)(1+Mr_0^2)\int_{\R^d} e^{-\|z\|^2/t}dz +  \tau(t)\\
    &\leq(p(x)+2\kappa r_0)(1+Mr_0^2)\pi^{d/2}t^{d/2} + \tau(t) \\
    & = \left(1+\frac{2\kappa r_0}{p(x)}\right)(1+Mr_0^2)p(x)\pi^{d/2}t^{d/2} + \tau(t) \\
    \end{split}
\end{equation}
\subsection{Explicit formula for $t_0$ and proof of \cref{lem:mult}}
We first give the explicit formula for the threshold $t_0$ in \cref{lem:mult}. Define
\[
A \coloneqq -\log\left(\frac{\eta}{10} p(x)\pi^{d/2}2^{-d/2}\right),
\qquad
f(s)\coloneqq A s + \frac{d}{2(1-2\gamma)}s\log\frac{1}{s}, \quad s\in (0,\infty).
\]
Let $s_T = \inf\{s > 0 : f(s) \geq \tfrac12\}$, and set
$t_T \coloneqq s_T^{\,\frac{1}{\,1-2\gamma\,}}$. Then
\begin{multline}\label{eq:t0_explicit}
t_0(\eta,L,M,\kappa,\gamma,d,p) \\
 \;=\; \min\left\{
\left(\frac{\eta}{10M}\right)^{\!\frac{1}{2\gamma}},\;
\left(\frac{-\log(1-\eta/10)}{2L^2}\right)^{\!\frac{1}{4\gamma-1}},\;
t_T,\;
\left(\frac{p(x)\eta}{20\kappa}\right)^{\!\frac{1}{\gamma}},\;
\beta_{d,\eta}^{-\frac{1}{1/2-\gamma}}
\right\}.
\end{multline}
Here $\beta_{d,\eta} = \sqrt{2}\bigl(\sqrt{d}+\sqrt{2\log(10/\eta)}\bigr)$ is the Gaussian concentration threshold. The five terms control, respectively, volume distortion, curvature, tail decay, density variation, and Gaussian tail fraction. The function $f(s)$ is concave on $(0,\infty)$, so $s_T$ can be computed by finding $s^* = \argmax f$ via $f'(s^*)=0$; if $f(s^*) < 1/2$ then $s_T = +\infty$, otherwise bisect on $(0, s^*]$.

\begin{proof}[Proof of \cref{lem:mult}]
From \cref{eq:mom:general:lb,eq:mom:general:ub}, to get multiplicative bounds as in \cref{eq:mom:mult} we need to control the factors
\begin{equation*}
   \left(1-\frac{2\kappa r_0}{p(x)}\right)(1-Mr_0^2)(1-\alpha(\hat t))e^{-L^2r_0^4/\hat t}
\end{equation*}
for the lower bound (with four multiplicative factors: density variation, volume distortion, Gaussian tail fraction $\alpha$, curvature, and the leading $p(x)\pi^{d/2}\hat t^{d/2}$) and
\begin{equation*}
    \left(1+\frac{2\kappa r_0}{p(x)}\right)(1+Mr_0^2)\left(1+\frac{\tau(\hat t)}{p(x)\pi^{d/2}\hat t^{d/2}}\right)
\end{equation*}
for the upper bound. Note that the exponential factor $e^{-L^2r_0^4/\hat t} \leq 1$ does not appear in the upper bound.

If each of the four factors in the lower bound is at least $1-\eta/10$, and each of the three factors in the upper bound is at most $1+\eta/10$, then since $(1-\eta/10)^5 \geq 1-\eta$ for $\eta \in (0,1/2)$ and $(1+\eta/10)^3 \leq 1+\eta$, the result follows.

Now what follows is getting uniform bounds of the above for $\hat t \in \mathcal{H}$, finding the largest $t$ that satisfies them all. First we note that $r_0^2 = t^{2\gamma}$, and then that
\begin{equation*}
    \max_{\hat t\in \mathcal{H}} \frac{L^2r_0^4}{\hat t} = \frac{L^2r_0^4}{t/2} = 2L^2t^{4\gamma -1}, \qquad \max_{\hat t \in \mathcal{H}}e^{-\frac{r_0^2}{\hat t}} = e^{-\frac{r_0^2}{2t}} = e^{-\frac{1}{2}t^{2\gamma -1}}. 
\end{equation*}
 From this we arrive at the following sufficient conditions:
\begin{equation*}
    \begin{split}
    & \quad \frac{2\kappa t^{\gamma}}{p(x)} \leq \frac{\eta}{10} \\
    & \quad Mt^{2\gamma} \leq \frac{\eta}{10} \\
    & \quad 2L^2t^{4\gamma -1} \leq -\log(1-\eta/10) \\
    & \quad e^{-\frac{1}{2}t^{2\gamma-1}} \leq  \frac{\eta}{10} p(x)\pi^{d/2}\left(\frac{t}{2}\right)^{d/2} \\
    & \quad \alpha(2t) \leq \frac{\eta}{10}
    \end{split}
\end{equation*}
The fifth condition controls the Gaussian tail fraction $\alpha(\hat t)$. The worst case over $\hat t \in \mathcal{H}$ is $\hat t = 2t$ (widest kernel relative to fixed $r_0$). The condition $\alpha(2t) \leq \eta/10$ is equivalent to
\[
\P\!\left(\|Z\| > r_0/\sqrt{2t}\right) \leq \eta/10, \qquad Z\sim\mathcal{N}(0,I_d).
\]
By Gaussian concentration of Lipschitz functions, $\P(\|Z\| > \sqrt{d}+s) \leq e^{-s^2/2}$. So a sufficient condition is
\[
\frac{r_0}{\sqrt{2t}} = \frac{t^{\gamma-1/2}}{\sqrt{2}} \geq \sqrt{d} + \sqrt{2\log(10/\eta)}.
\]
Equivalently $t \leq \beta_{d,\eta}^{-1/(1/2-\gamma)}$ where $\beta_{d,\eta} = \sqrt{2}(\sqrt{d}+\sqrt{2\log(10/\eta)})$.

For clarity, we reformulate the fourth condition as follows:
\begin{align*}
    e^{-\frac{1}{2}t^{2\gamma-1}} &\leq  \frac{\eta}{10} p(x)\pi^{d/2}\left(\frac{1}{2}\right)^{d/2}t^{d/2} \\
    -\frac{1}{2}t^{2\gamma-1} & \leq \log\left(\frac{\eta}{10} p(x)\pi^{d/2}2^{-d/2}\right) - \frac{d}{2}\log \frac{1}{t} \\
    \frac{1}{2}t^{2\gamma-1} & \geq -\log\left(\frac{\eta}{10} p(x)\pi^{d/2}2^{-d/2}\right) + \frac{d}{2}\log \frac{1}{t}
\end{align*}
With $s \coloneq t^{1-2\gamma}$ and $A = -\log\left(\frac{\eta}{10} p(x)\pi^{d/2}2^{-d/2}\right)$, the above implies
\begin{equation*}
    \frac{1}{2s} \geq A + \frac{d}{2(1-2\gamma)}\log \frac{1}{s} \iff
    \frac{1}{2}  \geq As + \frac{d}{2(1-2\gamma)} s \log \frac{1}{s}.
\end{equation*}
The expression on the right is concave in $s$ over $[0,\infty)$, which implies we can find $s_T = \inf\{s > 0 : f(s) \geq \tfrac12\}$.
\end{proof}

\subsection{Proof of \cref{cor:w}}
    By \cref{lem:mult}
    \begin{align*}
        \E[W^+] &=\E[K_{2t}(x,X)] - 2^{(d+\varepsilon)/2}\E[K_t(x,X)] \\
        &\leq (1+\eta)P_t - (1-\eta)2^{\varepsilon/2}P_t  \\
        &\leq (1+\eta - 2^{\varepsilon/2}(1-\eta))P_t = (1-2^{\varepsilon/2}+\eta(1+2^{\varepsilon/2}))P_t \\
        &\leq (1-c)(1-2^{\varepsilon/2})P_t.
    \end{align*}
    Similarly,
    \begin{align*}
        \E[W^-] &=\E[K_{2t}(x,X)] - 2^{(d-\varepsilon)/2}\E[K_t(x,X)] \\
        &\geq (1-\eta)P_t - (1+\eta)2^{-\varepsilon/2}P_t  \\
        &\geq (1-\eta - 2^{-\varepsilon/2}(1+\eta))P_t = (1-2^{-\varepsilon/2}-\eta(1+2^{-\varepsilon/2}))P_t \\
        & \geq (1-c)(1-2^{-\varepsilon/2})P_t.
    \end{align*}

    For the variance, first we use that $\Var(W) \leq \E[W^2]$. Then since $K_t^2 = K_{t/2}, K_{2t}^2 = K_t, K_tK_{2t} = K_{2t/3}$,
    \begin{align*}
        \Var(W^{+}) &\leq\E[\left(W^{+}\right)^2] \leq\left(\E\left[K_{2t}^2 + 2^{(d+\varepsilon)/2+1}K_tK_{2t} + 2^{d+\varepsilon}K_{t}^2\right]\right) \\
        & =\left(\E \left[ K_{t}+2^{(d+\varepsilon)/2+1}K_{2t/3}+2^{d+\varepsilon}K_{t/2}\right]\right) \\
        &\leq (1+\eta)\left(1+2^{(d+\varepsilon)/2+1}\left(\frac{2}{3}\right)^{d/2}+2^{d/2 + \varepsilon}\right)\pi^{d/2}t^{d/2}p(x) \\
        &\leq \left(1+1/2\right)\left(2^{-d/2-\varepsilon}+2^{-\varepsilon/2+1}\left(\frac{2}{3}\right)^{d/2}+1\right)2^{\varepsilon}P_t \\
        &\leq 2^{\varepsilon+3}P_t \\
    \end{align*}
Similarly for $W^-$, but we have instead $\Var(W^-) \leq 2^{-\varepsilon+3}P_t$.
\subsection{Proof of \cref{lem:anticoncentration_gaussian}} 
    Throughout, we use the multiplicative bounds from \cref{lem:mult}, which hold for $t$ sufficiently small.
    \subsubsection*{First moment bounds}
    Using the definition of $\eta$ and \cref{lem:mult}, direct calculation gives
    \begin{equation}\label{eq:y+_exp}
        \begin{split}
        \E[Y_+] &\leq \left((1+\eta)2^{\varepsilon/2}-(1-\eta)\right)P_{t} = \left(2^{\varepsilon/2}-1 + \eta (2^{\varepsilon/2}+1)\right)P_t \\
        &=(2^{\varepsilon/2}-1)(1+c)P_t
        \end{split}
    \end{equation}
    and
    \begin{align*}
         \E[-Y_-] &\leq \left((1+\eta)-(1-\eta)2^{-\varepsilon/2}\right)P_{t} = \left(1-2^{-\varepsilon/2}+ \eta\left(2^{-\varepsilon/2}+1\right) \right)P_t \\
         & = \left(1-2^{-\varepsilon/2}\right)(1+c)P_t
    \end{align*}

    \subsubsection*{Variance bounds}
    Using \cref{eq:mom},
    \begin{equation}\label{eq:y_sq}
        \begin{split}
       \E[Y_+^2] &\geq\E\left[(1-\eta)2^{d+\varepsilon}K_{t/2} - (1+\eta)2^{(d+\varepsilon)/2+1}K_{2t/3}+(1-\eta)K_t\right] \\
        &=\left((1-\eta)(2^{d/2+\varepsilon}+1)-(1+\eta)(2^{d+\varepsilon/2+1}3^{-d/2})\right)\pi^{d/2}t^{d/2}p(x) \\
        & = \left(\left(2^{d/2+\varepsilon}-2^{d+\varepsilon/2+1}3^{-d/2}+1\right)-\eta\left(2^{d/2+\varepsilon}+1+2^{d+\varepsilon/2+1}3^{-d/2}\right)\right)\pi^{d/2}t^{d/2}p(x)  \\
        & = \left(\left(2^{\varepsilon}-2^{d/2+\varepsilon/2+1}3^{-d/2}+2^{-d/2}\right)-\eta\left(2^{\varepsilon}+2^{d/2+\varepsilon/2+1}3^{-d/2}+2^{-d/2}\right)\right)P_t \\
        & = 2^{\varepsilon}P_t\left(\left(1-2^{1-\varepsilon/2}\left(\frac{2}{3}\right)^{d/2}+2^{-d/2-\varepsilon}\right) - \eta\left(1+2^{1-\varepsilon/2}\left(\frac{2}{3}\right)^{d/2}+2^{-d/2-\varepsilon}\right)\right) \\
        &= 2^{\varepsilon}\Gamma_+(1-\eta^{*}_+)P_t,
        \end{split}
    \end{equation}
    The condition $\eta^*_+ < 1$ ensures this bound is positive. From \cref{eq:y+_exp}
    \begin{align*}
        \E[Y_+]^2 \leq \left((2^{\varepsilon/2}-1)(1+c)P_t\right)^2.
    \end{align*}
    Then
    \begin{equation*}
        \begin{split}
        \Var(Y_+) &=\E[Y_+^2] - \E[Y_+]^2 \geq  2^{\varepsilon}\Gamma_+(1-\eta^{*}_+)P_t - \left((2^{\varepsilon/2}-1)(1+c)P_t\right)^2 \\
        &= 2^{\varepsilon}\Gamma_+(1-\eta^{*}_+)P_t + \bigO(P_t^2) = C_\sigma P_t + \bigO(P_t^2),
        \end{split}
    \end{equation*}
     where $C_\sigma = 2^{\varepsilon}\Gamma_+(1-\eta^{*}_+)$.
    \subsubsection*{Third moment bounds}
    First we note that using \cref{eq:mom}, similar calculations as in \cref{eq:y_sq} yield $\E[Y_+^m] = \bigO(P_t)$ and \cref{eq:y+_exp} gives $\E[Y_+]^m = \bigO(P_t^m)$. Thus,
    \begin{align*}
        \E[|Y_+-\E[Y_+]|^3] \leq \E[|Y_+|^3] + \bigO(P_t^2).
    \end{align*}
    We proceed to bound $\E[|Y_+|^3]$. For $t$ small enough that \cref{lem:mult} extends to $\hat t = 2t/5$ (via \cref{lem:general}), we have
    \begin{align*}
        \E[|Y_+|^3] &\leq \E\left[2^{3(d+\varepsilon)/2}K_{t}^3 + 3\cdot 2^{d+\varepsilon}K_t^2 K_{2t} + 3\cdot 2^{(d+\varepsilon)/2}K_tK_{2t}^2 + K_{2t}^3 \right] \\
        &=\E\left[2^{3(d+\varepsilon)/2}K_{t/3}+3\cdot 2^{d+\varepsilon}K_{2t/5}+ 3\cdot 2^{(d+\varepsilon)/2}K_{t/2}+ K_{2t/3} \right] \\
        &\leq (1+\eta)P_t\left(2^{d+3\varepsilon/2}3^{-d/2} + 3\cdot 2^{d+\varepsilon}5^{-d/2} +3\cdot 2^{\varepsilon/2-d/2}+3^{-d/2}\right) \\
        &  = (1+\eta)P_t2^{d/2}2^{3\varepsilon/2}\left[\left(\frac{2}{3}\right)^{d/2} + 3 \cdot 2^{d/2-\varepsilon/2}5^{-d/2} + 3 \cdot 2^{-\varepsilon-d} + 2^{-3\varepsilon/2-d/2} 3^{-d/2}\right] \\
        & = (1+\eta)\Delta_+ 2^{d/2}2^{3\varepsilon/2}P_t = C_\rho P_t,
    \end{align*}
    where $C_\rho =(1+\eta)\Delta_+ 2^{d/2}2^{3\varepsilon/2}$. 

    The bounds for $Y_-$ follow by replacing $\varepsilon \to -\varepsilon$ throughout.
\subsection{Proof of \cref{cor:berry_esseen}}
    We prove the result for $Y_{+,i} = 2^{(d+\varepsilon)/2}K_t(x,X_i) - K_{2t}(x,X_i)$; the proof for $Y_{-,i} = 2^{(d-\varepsilon)/2}K_t(x,X_i) - K_{2t}(x,X_i)$ is identical with $+\varepsilon/2$ replaced by $-\varepsilon/2$ throughout.
    
    If we apply the Berry-Esseen to the normalized sum $\frac{\sum_{i=1}^n (Y_{i,\pm} - \E[Y_{i,\pm}])}{\sqrt{n\Var(Y_{i,\pm})}}$, by Berry-Esseen, the approximation error is bounded by $C\frac{\rho_+}{\sigma_+^3\sqrt{n}}$,
    where $\sigma_+^2 = \Var(Y_{+,i})$, $\rho_+ = \mathbb{E}[|Y_{+,i} - \mathbb{E}[Y_{+,i}]|^3]$, and $C \leq 0.4748$.
    
    From \cref{lem:anticoncentration_gaussian}, we have
    \begin{align*}
        \sigma_+^2 &= \Var(Y_+) \geq 2^{\varepsilon}\Gamma_+(1-\eta^*_+)P_t + \bigO(P_t^2) =: C_\sigma P_t + \bigO(P_t^2), \\
        \rho_+ &\leq (1+\eta)\Delta_+2^{d/2}2^{3\varepsilon/2}P_t + \bigO(P_t^2) =: C_\rho P_t + \bigO(P_t^2),
    \end{align*}
    where $C_\sigma = 2^{\varepsilon}\Gamma_+(1-\eta^*_+)$ and $C_\rho = (1+\eta)\Delta_+2^{d/2}2^{3\varepsilon/2}$.
    
    Using Taylor expansion, we see that
    \begin{equation*}
        \begin{split}
        \frac{\rho}{\sigma^3\sqrt{n}} &\leq \frac{C_\rho P_t + \bigO(P_t^2)}{\sqrt{n}(C_\sigma P_t +\bigO(P_t^2))^{3/2}} = \frac{C_\rho P_t(1+\bigO(P_t))}{\sqrt{n}(C_\sigma P_t)^{3/2}(1+\bigO(P_t))^{3/2}} \\
        &= \frac{C_\rho}{C_\sigma^{3/2}\sqrt{P_t n}} \cdot \frac{1+\bigO(P_t)}{(1+\bigO(P_t))^{3/2}} = \frac{C_\rho}{C_\sigma^{3/2}\sqrt{P_t n}}(1+\bigO(P_t)) \\
        &= \frac{C_\rho}{C_\sigma^{3/2}\sqrt{n}}\left(\frac{1}{\sqrt{P_t}}+\bigO(\sqrt{P_t})\right).
        \end{split}
    \end{equation*}
    The expansion was made on $(1+\bigO(P_t))(1+\bigO(P_t))^{-3/2}$ in the second line above.
    
    Further,
    \begin{equation*}
        \begin{split}
        \frac{C_\rho}{C_\sigma^{3/2}} = \frac{(1+\eta)\Delta_+2^{d/2}2^{3\varepsilon/2}}{\left(2^{\varepsilon}\Gamma_+(1-\eta^{*}_+)\right)^{3/2}} = \frac{(1+\eta)\Delta_+2^{d/2}}{(1-\eta^{*}_+)^{3/2}\Gamma_+^{3/2}}
        \end{split}
    \end{equation*}

    Therefore, the normal approximation error for $\frac{1}{n}\sum_{i=1}^n Y_{i,+}$ is
    \begin{equation*}
        \frac{C(1+\eta)\Delta_+2^{d/2}}{\sqrt{n}\sqrt{P_t}(1-\eta^{*}_+)^{3/2}\Gamma_+^{3/2}} + O\left(\sqrt{\frac{P_t}{n}}\right)
    \end{equation*}
    where $C \leq 0.4748$. The proof for $Y_-$ is identical.
\section{Bandwidth}\label{sec:bandwidth_proof}
Recall $G(t) \coloneq \log S(x,e^t)$.
\begin{algorithm}[H]
\caption{Bandwidth Selection via Curvature Analysis}
\label{alg:bandwidth_selection}
\begin{algorithmic}[1]
\Require Samples $x_1,\dots,x_n \in \R^N$, reference point $x_0 \in \R^N$
\Ensure Optimal bandwidth $t^*$, dimension estimate $\hat{d}$
\State Choose a log-spaced grid of bandwidths $\{t_j\} \subset [t_{\min}, t_{\max}]$
\State Compute $S_j = S(x_0,t_j)=\frac{1}{n}\sum_{i=1}^n K_{t_j}(x_0,x_i)$ for all $j$
\State For each $t_j$, compute $G'_j$ and $G''_j$ using \cref{prop:derivatives}
\State Define $\rho_j = G'_j/(|G''_j|+\delta)$ with $\delta=10^{-3}$
\State $t^* \gets \argmax_j \rho_j$
\State $\hat{d} \gets 2\log_2\!\big(S(x_0,2t^*)/S(x_0,t^*)\big)$
\State \Return $t^*, \hat{d}$
\end{algorithmic}
\end{algorithm}

\begin{proposition}\label{prop:derivatives}
If we define
\[
\langle f \rangle = \frac{\sum_{i=1}^n f(u_i) e^{-u_i}}{\sum_{i=1}^n e^{-u_i}}, \quad u_i = \frac{\|x-X_i\|^2}{e^t},
\]
then:
\begin{align*}
    G'(t) &= \langle u_i \rangle, \\
    G''(t) &= \langle u_i^2 - u_i \rangle - \langle u_i \rangle^2.
\end{align*}
\end{proposition}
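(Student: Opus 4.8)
The plan for \cref{prop:derivatives} is to reduce everything to repeated differentiation of a single normalized sum. First I would introduce $c_i = \|x-X_i\|^2$ and the $t$-dependent quantities $u_i = u_i(t) = c_i e^{-t}$, so that $K_{e^t}(x,X_i) = e^{-u_i}$ and $S(x,e^t) = \tfrac1n\sum_{i=1}^n e^{-u_i}$. The one elementary fact that drives the whole computation is $\frac{d}{dt}u_i = -c_i e^{-t} = -u_i$. Writing $Z(t) = \sum_{i=1}^n e^{-u_i}$ so that $G(t) = \log Z(t) - \log n$, the first derivative is immediate: $G' = Z'/Z$, and since $Z' = \sum_i (-u_i')e^{-u_i} = \sum_i u_i e^{-u_i}$ we get $G'(t) = \langle u_i\rangle$.

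For the second and third derivatives I would establish once and for all a differentiation rule for averages of the shape $\langle f(u_i)\rangle = \big(\sum_i f(u_i)e^{-u_i}\big)/Z$, where $f$ is a fixed function not depending explicitly on $t$. Differentiating the numerator by the chain rule and using $u_i' = -u_i$ gives $\frac{d}{dt}\sum_i f(u_i)e^{-u_i} = -\sum_i u_i\big(f'(u_i)-f(u_i)\big)e^{-u_i}$; dividing by $Z$, subtracting $\langle f(u_i)\rangle\,(Z'/Z)$, and using $Z'/Z = \langle u_i\rangle$ yields the master formula
\[
\frac{d}{dt}\langle f(u_i)\rangle = \big\langle\, u_i f(u_i) - u_i f'(u_i)\,\big\rangle - \langle f(u_i)\rangle\,\langle u_i\rangle .
\]
Taking $f(u)=u$ recovers $G'' = (G')' = \langle u_i^2 - u_i\rangle - \langle u_i\rangle^2$. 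For $G'''$ I would differentiate this expression termwise: the master rule with $f(u) = u^2-u$ handles the first term and contributes $\langle u_i^3 - 3u_i^2 + u_i\rangle - \langle u_i^2 - u_i\rangle\langle u_i\rangle$, while the ordinary product rule applied to $\langle u_i\rangle^2$ together with the formula for $(\langle u_i\rangle)'$ handles the rest; collecting terms produces exactly $\langle u_i^3 - 3u_i^2 + u_i\rangle - 3\langle u_i\rangle\langle u_i^2 - u_i\rangle + 2\langle u_i\rangle^3$.

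I do not expect a genuine obstacle here, since the result is ultimately a computation, but the step that needs care is the bookkeeping in the master formula: the weights $e^{-u_i}$ defining $\langle\cdot\rangle$ are themselves $t$-dependent, so one cannot treat $\langle\cdot\rangle$ as a fixed expectation, and the two contributions (from differentiating $f(u_i)$ and from differentiating the weights and the normalization $Z$) must both be tracked. As an independent check I would note that with $s = -e^{-t}$ and $\psi(s) = \log\sum_i e^{s c_i}$, the function $\psi$ agrees up to the constant $\log n$ with the cumulant generating function of the empirical measure on $\{c_i\}$, and the identity $G(t) = \psi(-e^{-t}) - \log n$ combined with the chain rule and the first three tilted cumulants of $\psi$ reproduces precisely the three stated formulas; this makes any stray sign or coefficient error easy to detect.
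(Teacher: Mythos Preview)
Your proposal is correct and follows essentially the same approach as the paper: both start from $u_i' = -u_i$ and reduce everything to direct differentiation, with the paper organizing the computation via $K_i^{(k)} = (\text{polynomial in }u_i)K_i$ and the standard log-derivative identities $G^{(k)} = S^{(k)}/S - \cdots$, while you package the same calculus into a single master rule for $\tfrac{d}{dt}\langle f(u_i)\rangle$ and iterate. The two organizations are interchangeable for the three derivatives needed here; your cumulant-generating-function cross-check is a nice addition not present in the paper.
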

\begin{proof}
Set $u_i(t) = \|x-X_i\|^2/e^t$ and $K_i(t) = e^{-u_i(t)}$. Write $S = S(x,e^t) = \frac{1}{n}\sum_{i=1}^n K_i$ and define the weighted average $\langle f \rangle = \frac{1}{nS}\sum_{i=1}^n f(u_i)K_i$.

Direct calculation gives:
\begin{align*}
    u_i' &= -u_i, \\
    K_i' &= u_iK_i, \\
    K_i'' &= u_i(u_i-1)K_i.
\end{align*}

Therefore:
\begin{align*}
    G'(t) &= \frac{S'}{S} = \langle u_i \rangle, \\
    G''(t) &= \frac{S''}{S} - \left(\frac{S'}{S}\right)^2 = \langle u_i^2 - u_i \rangle - \langle u_i \rangle^2.
\end{align*}
\end{proof}
\end{document}